\newtheorem{theorem}{Theorem}[section]
\newtheorem{proposition}[theorem]{Proposition}
\newtheorem{lemma}[theorem]{Lemma}
\newtheorem{corollary}[theorem]{Corollary}
\theoremstyle{definition}
\theoremstyle{remark}
\newtheorem{remark}[theorem]{Remark}
\begin{document}

\bibliographystyle{plain}

\title[Spectra of some invertible weighted composition operators]{Spectra of some invertible weighted composition operators on Hardy and weighted Bergman spaces in the unit ball}

\author[Y.X. Gao and Z.H. Zhou] {Yong-Xin Gao and Ze-Hua Zhou$^*$}
\address{\newline  Yong-Xin Gao, Department of Mathematics,
Tianjin University, Tianjin 300072, P.R. China.}

\email{tqlgao@163.com}

\address{\newline Ze-Hua Zhou\newline Department of Mathematics, Tianjin University, Tianjin 300072,
P.R. China. \newline
Center for Applied Mathematics, Tianjin University, Tianjin 300072,
P.R. China.}
\email{zehuazhoumath@aliyun.com;zhzhou@tju.edu.cn}

\keywords{Weighted composition operator; Spectrum; Automorphism; Hardy spaces; Weighted Bergman spaces; Unit ball}
\subjclass[2010]{primary 47B38, 32A30; secondary 32H02, 47B33}

\date{}
\thanks{\noindent $^*$Corresponding author.\\
This work was supported in part by the National Natural Science
Foundation of China (Grant Nos. 11371276; 11301373; 11201331).}

\begin{abstract}
In this paper, we investigate the spectra of invertible weighted composition operators with automorphism symbols, on Hardy space $H^2(\mathbb{B}_N)$ and weighted Bergman spaces $A_\alpha^2(\mathbb{B}_N)$, where $\mathbb{B}_N$ is the unit ball of the
$N$-dimensional complex space.  By taking $N=1$, $\mathbb{B}_N=\mathbb{D}$ the unit disc, we also complete the discussion about the spectrum of a weighted composition operator when it is invertible on $H^2(\mathbb{D})$ or $A_\alpha^2(\mathbb{D})$.
\end{abstract}
\maketitle

\section{Introduction}

Let $\mathbb{B}_N$ be the unit ball in $\mathbb{C}^N$ and $S_N$ denote the unit sphere. Let $H(\mathbb{B}_N)$ be the space of all holomorphic functions
on $\mathbb{B}_N$. The Hardy space $H^p(\mathbb{B}_N)$ is the set of holomorphic functions on $\mathbb{B}_N$ such that
$$||f||_{H^p}^p=\sup_{0<r<1}\int_{S_N}|f(r\zeta)|^pd\sigma(\zeta)<\infty,$$
where $d\sigma$ is the normalized surface measure on $S_N$. For $\alpha>-1$, the weighted Bergman space $A_\alpha^p(\mathbb{B}_N)$ is the set of holomorphic functions on $\mathbb{B}_N$ such that
$$||f||_{A_\alpha^p}^p=\int_{\mathbb{B}_N}c_\alpha|f(z)|^p(1-|z|^2)^\alpha dv(z)<\infty,$$
where $dv$ is the volume measure on $\mathbb{B}_N$ and $c_\alpha$ is a positive constant chosen so that
$$dv_\alpha(z)=c_\alpha(1-|z|^2)^\alpha dv(z)$$
is normalized. When taking $p=2$, the spaces are Hilbert.

Let $\varphi$ be a holomorphic map from $\mathbb{B}_N$ into itself and $\psi$ be a holomorphic function on $\mathbb{B}_N$. Then we define a weighted composition operator on $H(\mathbb{B}_N)$, by
$$C_{\psi,\varphi}f=\psi\cdot f\comp\varphi,$$
which is a linear operator regarded as generalization of a multiplication operator $M_{\psi}$ by putting $\varphi=Id$ the
identity map of $\mathbb{B}_N$, and a composition operator $C_{\varphi}$ when taking $\psi=1$.

Weighted composition operators arise naturally in the study of many subjects. For example, Forelli \cite{For} showed that all surjective isometries of the Hardy space $H^p(\mathbb{D})$ are weighted composition operators when the space is not Hilbert. A similar result on weighted Bergman space $A^p_{\alpha}(\mathbb{D})$ is due to Kolaski \cite{Kol}. It has also been shown that the commutants of analytic Toeplitz operators whose symbols are finite Blaschke products are exactly the multiple valued weighted composition operators, which is defined in \cite{CG}.

Over the past fifty years, composition operators have been actively investigated from a various points of view. Such as the boundness, compactness, spectrum and hypercyclicity, as well as the topological structure of the set of composition operators on various analytic function spaces. The recent papers or books \cite{Bay1,Bay2,BC,BLW,CM,HO1,HO2,M,Sha,SZ,ZC,ZS,ZZ} and the related references therein are good sources for information on much of the developments in the theory of (weighted) composition operators.

The spectral properties of weighted composition operators on the spaces of analytic functions on the unit disc $\mathbb{D}$ have also been discussed in many papers. Most of them are originated from the work of Kamowitz in \cite{Kam1,Kam2}, which is still instructive until now. For a non-automorphic symbol $\varphi$ with a fixed point in $\mathbb{D}$, Aron and Lindstr\"{o}m \cite{AL} completely described the spectrum of a weighted composition operator $C_{\psi,\varphi}$ acting on the weighted Banach
spaces of $H^{\infty}$-type.

In 2011, Gajath Gunatillake \cite{Gun2} showed that a weighted composition operator $C_{\psi,\varphi}$ is invertible on $H^2(\mathbb{D})$ if and only if both $\psi$ and $\frac{1}{\psi}$ are bounded  on $\mathbb{D}$ and $\varphi$ is an automorphism of $\mathbb{D}$. Then he investigated the spectrum of  $C_{\psi,\varphi}$ on the space $H^2(\mathbb{D})$ when it is invertible, with an extra hypothesis of the continuity of $\psi$ on $\overline{\mathbb{D}}$. He got a complete result when $\varphi$ is either elliptic or parabolic, see Theorem 3.1.1, Theorem 3.2.1 and Theorem 3.3.1 in \cite{Gun2}.

In 2013, Hyv\"{a}rinen et al \cite{Hyv} generalized Gunatillake's work onto more spaces, such as $H^p(\mathbb{D})$, $A_{\alpha}^p(\mathbb{D})$ and $H_{v_p}^{\infty}(\mathbb{D})$. Moreover, for a hyperbolic automorphism $\varphi$, they get the spectral radius of $C_{\psi,\varphi}$. Then they find out the spectrum of $C_{\psi,\varphi}$ in the case when $|\psi(a)/\varphi'(a)^s|$ is no less than $|\psi(b)/\varphi'(b)^s|$, where $a$ is the attractive fixed point of $\varphi$, $b$ is the repulsive fixed point of $\varphi$ and $s$ depends on the space. See theorem 4.9 in \cite{Hyv}.

The thing remains is the case when $|\psi(a)/\varphi'(a)^s|$ is large than $|\psi(b)/\varphi'(b)^s|$. In this paper, we continue the discussion in \cite{Gun2,Hyv} and give the result for this case, as a corollary of our main results. In fact, the spaces we consider in this paper consist of functions defined on $\mathbb{B}_N$ instead of the unit disc $\mathbb{D}$, such as $H^2(\mathbb{B}_N)$ and $A_\alpha^2(\mathbb{B}_N)$. Then we determine the spectrum of invertible $C_{\psi,\varphi}$ when $\varphi$ is an automorphism of $\mathbb{B}_N$ with no fixed point in $\mathbb{B}_N$, which are Theorem 3.11 and Theorem 4.7.

Finally, we get Corollary 3.12 as a special case of Theorem 3.11, which give a complete result about the spectra of invertible $C_{\psi,\varphi}$ on $H^2(\mathbb{D})$ and $A_\alpha^2(\mathbb{D})$ when $\varphi$ is hyperbolic.

\section{Preliminaries}

\subsection{Spaces}

The spaces we consider throughout the paper are $H^p(\mathbb{B}_N)$ and $A_\alpha^p(\mathbb{B}_N)$ when p=2, or equivalently, when they are Hilbert spaces.

Recall that the reproducing kernel or the point evaluation kernel at $w$ of $H^2(\mathbb{B}_N)$ is
$$\left(\frac{1}{1-\langle z,w\rangle}\right)^N,$$
and its norm is $\left(1-|w|^2\right)^{-N/2}$.
Also, in $A_\alpha^2(\mathbb{B}_N)$ the kernel for evaluation at $w$ is
$$\left(\frac{1}{1-\langle z,w\rangle}\right)^{N+1+\alpha},$$
and its norm is $\left(1-|w|^2\right)^{-(N+1+\alpha)/2}$.

Let $\{\beta(n)\}_{n=0}^\infty$ be a sequence of positive numbers. Then we define $H^2(\beta,\mathbb{B}_N)$, if possible, as a Hilbert space, in which the monomials form a complete orthogonal set, consisting of holomorphic functions on $\mathbb{B}_N$ such that
$$||f||^2=\sum_{s=0}^{\infty}\beta(s)^2||f_s||_{H^2}^2<\infty,$$
where $f=\sum_{s=0}^{\infty}f_s$ is the homogeneous expansion of the function.

We are particularly interested in the cases when $\beta(n)^2=(1+n)^{1-\gamma}$ with $\gamma\geqslant1$. When $\gamma=1$, the space $H^2(\beta,\mathbb{B}_N)$ is exactly $H^2(\mathbb{B}_N)$. When $\gamma>1$, it is not hard to check by Stirling's formula that
$H^2(\beta,\mathbb{B}_N)$ and $A_{\gamma-2}^2(\mathbb{B}_N)$ consist of same functions, and the norms on the two spaces are equivalent.

For more details about $H^2(\beta,\mathbb{B}_N)$, one can turn to Section 2.1 in \cite{CM}.

\begin{remark}
By the equivalence of the norms, if $\beta(n)=(n+1)^{1-\gamma}$ with $\gamma>1$, then the spectra of a weighted composition on $H^2(\beta,\mathbb{B}_N)$ and $A_{\gamma-2}^2(\mathbb{B}_N)$ are same. Thus, we may as well equip $H^2(\beta,\mathbb{B}_N)$ with the norm on $A_{\gamma-2}^2(\mathbb{B}_N)$. Hence we can focus on the spaces $H^2(\beta,\mathbb{B}_N)$ for $\beta(n)=(n+1)^{1-\gamma}$ with $\gamma\geqslant1$, instead of $H^2(\mathbb{B}_N)$ and $A_\alpha^2(\mathbb{B}_N)$. Straightforwardly, throughout this paper, by writing $H^2(\beta,\mathbb{B}_N)$ for $\beta(n)=(n+1)^{1-\gamma}$ we just mean $H^2(\mathbb{B}_N)$ when $\gamma=1$ and $A_{\gamma-2}^2(\mathbb{B}_N)$ when $\gamma>1$.
\end{remark}
Let $K_w$ be the point evaluation kernel at $w$ of $H^2(\beta,\mathbb{B}_N)$, then according to Remark 2.1,
$$K_w(z)=\left(\frac{1}{1-\langle z,w\rangle}\right)^{2K},$$
where $K=(N+1-\gamma)/2$. Also we have
$$||K_w||=\left(\frac{1}{1-|w|^2}\right)^K.$$

\subsection{Automorphisms of $\mathbb{B}_N$}

Let $\varphi$ be an automorphism of $\mathbb{B}_N$. If $\varphi$ has no fixed point in $\mathbb{B}_N$, then by Proposition 2.2.9 in \cite{Aba}, $\varphi$ has at least one and at most two fixed points on $S_N$. Moreover, Theorem 2.83 in \cite{CM} shows that $\varphi_k$ converge to one of the fixed points uniformly on compact subsets of $\mathbb{B}_N$. We call it the Denjoy-Wolff point of $\varphi$. Thus an automorphism $\varphi$ of $\mathbb{B}_N$ with no fixed point in $\mathbb{B}_N$ falls into one of the two classes below:

$\varphi$ fixes two distinct points on $S_N$. Then one of the fixed points is the Denjoy-Wolff point of $\varphi$.

$\varphi$ fixes one point on $S_N$. Then the only fixed point is the Denjoy-Wolff point of $\varphi$.

Note that the two classes above are exactly the generalization of hyperbolic and parabolic automorphisms of the unit disk $\mathbb{D}$ respectively. We will treat the two cases separately in this paper.

For any automorphism $\varphi$ of $\mathbb{B}_N$ we have
$$1-|\varphi(z)|^2=\frac{(1-|z|^2)(1-|a|^2)}{|1-\langle z,a\rangle|^2}.$$
This equation will be used repeatedly.

For any two points $z$ and $w$ in $\mathbb{B}_N$, define $d(z,w)=|\varphi_w(z)|$, where $\varphi_w$ is the the involutive automorphism that exchange $0$ and $a$. Then $d(\cdot,\cdot)$ gives a metric on $\mathbb{B}_N$. We call it the pseudo-hyperbolic metric of $\mathbb{B}_N$ and $d(z,w)$ is the pseudo-hyperbolic distance between $z$ and $w$. It is easy to check that the pseudo-hyperbolic distance is invariant under automorphisms of $\mathbb{B}_N$, that is,
$$d(\varphi(z),\varphi(w))=d(z,w)$$
whenever $\varphi$ is an automorphism of $\mathbb{B}_N$. Moreover, we have
$$1-d(z,w)^2=\frac{(1-|z|^2)(1-|w|^2)}{|1-\langle z,w\rangle|^2}.$$

Let $\varphi$ be a holomorphic map from $\mathbb{B}_N$ into itself. We use $\varphi_k$ denote the $k$-th iterate of $\varphi$ for $k\in\mathbb{N}^*$. If $\varphi$ is an automorphism of $\mathbb{B}_N$, $\varphi$ is invertible. Then we define $\varphi_k$ as the $|k|$-th iterate of $\varphi^{-1}$ for $k<0$. When $k=0$, we set $\varphi_0$ be the identity function of $\mathbb{B}_N$.

We use $Aut(\mathbb{B}_N)$ to denote the set of all automorphisms of $\mathbb{B}_N$ throughout the paper.

\subsection{Weighted composition operators}

Here we list some facts about the weighted composition operators. They are the fundamental of our discussion. All the facts can be checked easily.

\begin{proposition}
Suppose $C_{\psi_1,\varphi_1}$ and $C_{\psi_2,\varphi_2}$ are bounded on $H^2(\beta,\mathbb{B}_N)$, then $C_{\psi_1,\varphi_1}C_{\psi_2,\varphi_2}$ is also a weighted composition operator on $H^2(\beta,\mathbb{B}_N)$.
\end{proposition}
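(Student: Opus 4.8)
The plan is to prove the statement by a direct computation from the definition, identifying the weight and symbol of the product explicitly. Fix an arbitrary $f\in H^2(\beta,\mathbb{B}_N)$ and apply the two operators in turn. First, by definition $C_{\psi_2,\varphi_2}f=\psi_2\cdot(f\comp\varphi_2)$. Applying $C_{\psi_1,\varphi_1}$ to this and using that precomposition with $\varphi_1$ distributes over the pointwise product, I would obtain
\begin{equation*}
C_{\psi_1,\varphi_1}C_{\psi_2,\varphi_2}f=\psi_1\cdot\bigl[(\psi_2\cdot(f\comp\varphi_2))\comp\varphi_1\bigr]=\psi_1\cdot(\psi_2\comp\varphi_1)\cdot\bigl(f\comp\varphi_2\comp\varphi_1\bigr).
\end{equation*}

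Reading off this identity, the product is exactly $C_{\psi,\varphi}$ with weight $\psi=\psi_1\cdot(\psi_2\comp\varphi_1)$ and symbol $\varphi=\varphi_2\comp\varphi_1$. The only point that requires care is the order of composition: the symbols compose in the reverse order to the operators, so the symbol of the product is $\varphi_2\comp\varphi_1$ rather than $\varphi_1\comp\varphi_2$.

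It then remains to check that this weight and symbol are admissible, i.e. that $\varphi$ is a holomorphic self-map of $\mathbb{B}_N$ and that $\psi$ is holomorphic on $\mathbb{B}_N$, so that $C_{\psi,\varphi}$ is genuinely a weighted composition operator in the sense of the definition. Since $\varphi_1$ and $\varphi_2$ are holomorphic maps of $\mathbb{B}_N$ into itself, so is their composition $\varphi_2\comp\varphi_1$; and since $\psi_2\comp\varphi_1$ is holomorphic on $\mathbb{B}_N$ (as $\psi_2$ is holomorphic and $\varphi_1$ takes values in $\mathbb{B}_N$), the product $\psi_1\cdot(\psi_2\comp\varphi_1)$ is again holomorphic. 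Boundedness on $H^2(\beta,\mathbb{B}_N)$ is inherited automatically, being a product of two bounded operators. I expect no genuine obstacle here: the argument is a formal manipulation of the definition, and the only substantive observation is the reversal of composition order for the symbols.
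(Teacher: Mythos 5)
Your computation is exactly the paper's proof: the same direct expansion identifying the product as $C_{\psi,\varphi}$ with $\psi=\psi_1\cdot(\psi_2\comp\varphi_1)$ and $\varphi=\varphi_2\comp\varphi_1$. The additional admissibility and boundedness remarks are correct but routine; the argument is the same.
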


\begin{proof}
For any $f\in H^2(\beta,\mathbb{B}_N)$,
\begin{align*}
C_{\psi_1,\varphi_1}C_{\psi_2,\varphi_2}f&=C_{\psi_1,\varphi_1}(\psi_2\cdot f\comp\varphi_2)
\\&=\psi_1\cdot\psi_2\comp\varphi_1\cdot f\comp\varphi_2\comp\varphi_1.
\end{align*}
Thus we have
$$C_{\psi_1,\varphi_1}C_{\psi_2,\varphi_2}=C_{(\psi_1\cdot\psi_2\comp\varphi_1),(\varphi_2\comp\varphi_1)}.$$
\end{proof}

Let $\varphi$ be a holomorphic map from $\mathbb{B}_N$ into itself and $\psi$ be a holomorphic function on $\mathbb{B}_N$. We define
$$\psi_{(k)}=\prod_{j=0}^{k-1}\psi\comp\varphi_j$$
for $k\in\mathbb{N}^*$. Then according to Proposition 2.2, we can check easily that
$$C_{\psi,\varphi}^n=C_{\psi_{(n)},\varphi_n}.$$

For a holomorphic function $\psi$ on $\mathbb{B}_N$, we say $\psi$ is bounded away from zero if $\inf_{z\in \mathbb{B}_N}|\psi(z)|>0$. Note that $\psi$ is bounded away from zero on $\mathbb{B}_N$ if and only if $\frac{1}{\psi}$ is bounded on $\mathbb{B}_N$.

\begin{proposition}
Suppose $\varphi$ is an automorphism of $\mathbb{B}_N$, then $C_{\psi,\varphi}$ is invertible on $H^2(\beta,\mathbb{B}_N)$ if and only if $\psi$ is both bounded and bounded away from zero on $\mathbb{B}_N$.
\end{proposition}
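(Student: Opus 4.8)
The plan is to prove the two implications separately: for sufficiency I would exhibit an explicit inverse, and for necessity I would read off the two conditions from how the adjoint $C_{\psi,\varphi}^*$ acts on the reproducing kernels $K_w$.

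For the sufficiency, suppose $\psi$ is bounded and bounded away from zero. Since $\varphi\in Aut(\mathbb{B}_N)$ we have $\varphi^{-1}\in Aut(\mathbb{B}_N)$, and I would take as candidate inverse the operator $C_{\eta,\varphi^{-1}}$ with $\eta=1/(\psi\circ\varphi^{-1})$. The composition formula of Proposition 2.2 gives directly
\[
C_{\eta,\varphi^{-1}}C_{\psi,\varphi}=C_{\eta\cdot(\psi\circ\varphi^{-1}),\,Id}=I,
\]
and, using $\eta\circ\varphi=1/\psi$, also $C_{\psi,\varphi}C_{\eta,\varphi^{-1}}=C_{\psi\cdot(\eta\circ\varphi),\,Id}=I$. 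What remains is boundedness of both operators. Factoring $C_{\psi,\varphi}=M_\psi C_\varphi$, the multiplier $M_\psi$ is bounded because $\psi\in H^\infty(\mathbb{B}_N)$, and $C_\varphi$ is bounded since $\varphi$ is an automorphism; the identical argument applies to $C_{\eta,\varphi^{-1}}$ because $\eta=(1/\psi)\circ\varphi^{-1}$ is bounded whenever $1/\psi$ is. This gives invertibility.

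For the necessity, assume $C_{\psi,\varphi}$ is invertible. The key identity, obtained by pairing with an arbitrary $f$ and using the reproducing property, is $C_{\psi,\varphi}^*K_w=\overline{\psi(w)}\,K_{\varphi(w)}$. Taking norms and using $\|K_w\|=(1-|w|^2)^{-K}$ yields
\[
|\psi(w)|\,\frac{\|K_{\varphi(w)}\|}{\|K_w\|}=\frac{\|C_{\psi,\varphi}^*K_w\|}{\|K_w\|}\le\|C_{\psi,\varphi}\|.
\]
By the automorphism identity $1-|\varphi(w)|^2=(1-|w|^2)(1-|a|^2)/|1-\langle w,a\rangle|^2$ with $a=\varphi^{-1}(0)$, the ratio of kernel norms equals $(|1-\langle w,a\rangle|^2/(1-|a|^2))^{K}$, and since $1-|a|\le|1-\langle w,a\rangle|\le 1+|a|$ with $|a|<1$ this quantity is bounded above and below by positive constants independent of $w$ (for every real value of $K$). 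The lower bound on the ratio forces $\sup_w|\psi(w)|<\infty$, so $\psi$ is bounded.

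To see that $\psi$ is bounded away from zero, I would instead apply the bounded operator $(C_{\psi,\varphi}^*)^{-1}$ to the rewritten identity $K_w=\overline{\psi(w)}\,(C_{\psi,\varphi}^*)^{-1}K_{\varphi(w)}$, which gives
\[
|\psi(w)|\ge\frac{1}{\|(C_{\psi,\varphi}^*)^{-1}\|}\cdot\frac{\|K_w\|}{\|K_{\varphi(w)}\|},
\]
and the kernel-norm ratio is again bounded below by a positive constant by the same computation; hence $\inf_w|\psi(w)|>0$. The main obstacle is thus isolated in controlling $\|K_{\varphi(w)}\|/\|K_w\|$ uniformly in $w$, and once the automorphism identity reduces it to the elementary bounds on $|1-\langle w,a\rangle|$, both the upper and the lower estimate on $|\psi|$ drop out; the only ancillary fact needed for sufficiency, the boundedness of composition operators with automorphism symbols, is standard on $H^2(\mathbb{B}_N)$ and $A_\alpha^2(\mathbb{B}_N)$.
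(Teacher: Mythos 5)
Your proof is correct, and the sufficiency half coincides with the paper's: both exhibit the explicit inverse $C_{\frac{1}{\psi\circ\varphi^{-1}},\varphi^{-1}}$ via the composition formula of Proposition 2.2 and check boundedness through the factorization $M_\psi C_\varphi$. Where you genuinely diverge is in the necessity direction. The paper's argument is a one-line algebraic reduction: since $C_\varphi$ is invertible with inverse $C_{\varphi^{-1}}$, the product $C_{\psi,\varphi}C_\varphi^{-1}=M_\psi$ is invertible, and the standard characterization of invertible multiplication operators gives that $\psi$ is bounded and bounded away from zero. You instead work with the adjoint identity $C_{\psi,\varphi}^*K_w=\overline{\psi(w)}K_{\varphi(w)}$ (the paper's Proposition 2.5) and the automorphism identity for $1-|\varphi(w)|^2$, showing the kernel-norm ratio $\|K_{\varphi(w)}\|/\|K_w\|=\bigl(|1-\langle w,a\rangle|^2/(1-|a|^2)\bigr)^{K}$ is pinched between positive constants, so that boundedness of $C_{\psi,\varphi}^*$ forces $\sup|\psi|<\infty$ and boundedness of $(C_{\psi,\varphi}^*)^{-1}$ forces $\inf|\psi|>0$. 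Your route is more hands-on and self-contained --- it does not outsource anything to the theory of multiplication operators, and it only uses that $C_{\psi,\varphi}$ is bounded above and below, not the full composition algebra --- at the cost of needing the explicit kernel norms and the uniform two-sided bound on $|1-\langle w,a\rangle|$. The paper's route is shorter and transfers verbatim to any space on which $C_\varphi$ is invertible for automorphisms, but it silently relies on the fact that an invertible $M_\psi$ has symbol bounded away from zero, whose standard proof is essentially your kernel estimate in disguise. Both are sound.
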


\begin{proof}
Since $\varphi$ is an automorphism of $\mathbb{B}_N$, then the composition operator $C_\varphi$ is invertible on $H^2(\beta,\mathbb{B}_N)$, and $C_\varphi^{-1}=C_{\varphi^{-1}}$. So if $C_{\psi,\varphi}$ is invertible on $H^2(\beta,\mathbb{B}_N)$, then $C_{\psi,\varphi}C_\varphi^{-1}$ is also invertible on $H^2(\beta,\mathbb{B}_N)$. However, $C_{\psi,\varphi}C_\varphi^{-1}=M_\psi$, which is the multiplication induced by $\psi$. Thus the invertibility of $M_\psi$ shows that $\psi$ is both bounded and bounded away from zero on $\mathbb{B}_N$.

On the other hand, if $\psi$ is both bounded and bounded away from zero on $\mathbb{B}_N$, then
$\frac{1}{\psi\comp\varphi^{-1}}$ is bounded on $\mathbb{B}_N$. So $C_{\frac{1}{\psi\comp\varphi^{-1}},\varphi^{-1}}$ is bounded on $H^2(\beta,\mathbb{B}_N)$. Then by Proposition 2.2, we have
$$C_{\psi,\varphi}^{-1}C_{\frac{1}{\psi\comp\varphi^{-1}},\varphi^{-1}}= C_{\frac{1}{\psi\comp\varphi^{-1}},\varphi^{-1}}C_{\psi,\varphi}^{-1}=I.$$
Thus we have
$$C_{\psi,\varphi}^{-1}=C_{\frac{1}{\psi\comp\varphi^{-1}},\varphi^{-1}}.$$
\end{proof}

In most situations we require that $\psi$ is continuous up to the $S_N$. So throughout this paper, we always consider the weighted composition operator $C_{\psi,\varphi}$ such that $\varphi$ is an automorphism of $\mathbb{B}_N$ and $\psi\in A(\mathbb{B}_N)$ is bounded away from zero. Here $A(\mathbb{B}_N)$ denotes the set of functions that are holomorphic on $\mathbb{B}_N$ and continuous up to the boundary $S_N$.

If $\varphi$ is an automorphism of $\mathbb{B}_N$ and $\psi\in A(\mathbb{B}_N)$ is bounded away from zero, we also define
$$\psi_{(k)}=\prod_{j=k}^{0}\frac{1}{\psi\comp\varphi_k}$$
for $k<0$. When $k=0$, we set $\psi_{(0)}=1$.

\begin{remark}
Another useful observation of $C_{\psi,\varphi}$ is that we can write
$$C_{\psi,\varphi}=M_\psi C_\varphi$$
whenever $\varphi$ is an automorphism of $\mathbb{B}_N$ and $\psi\in A(\mathbb{B}_N)$.
\end{remark}

The next proposition is about the adjoint operator of $C_{\psi,\varphi}$.

\begin{proposition}
Suppose $C_{\psi,\varphi}$ is bounded on $H^2(\beta,\mathbb{B}_N)$, then we have
$$C_{\psi,\varphi}^*K_z=\overline{\psi(z)}K_{\varphi(z)}$$
for all $z\in \mathbb{B}_N$.
\end{proposition}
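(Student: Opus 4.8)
The plan is to verify the identity by pairing both sides against an arbitrary element of $H^2(\beta,\mathbb{B}_N)$ and invoking the reproducing property of the kernel functions $K_w$. Recall that each $K_w$ reproduces point evaluation, meaning $\langle f, K_w\rangle = f(w)$ for every $f$ in the space and every $w\in\mathbb{B}_N$; since the explicit formula $K_w(z) = \left(\frac{1}{1-\langle z,w\rangle}\right)^{2K}$ recorded after Remark 2.1 identifies $K_w$ as the genuine point evaluation kernel, this reproducing property is at our disposal.

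First I would fix $z\in\mathbb{B}_N$ together with an arbitrary $f\in H^2(\beta,\mathbb{B}_N)$ and use the defining relation of the Hilbert space adjoint to write $\langle f, C_{\psi,\varphi}^* K_z\rangle = \langle C_{\psi,\varphi} f, K_z\rangle$. The right-hand side is an inner product against the kernel at $z$, so by the reproducing property it equals the value of $C_{\psi,\varphi} f$ at the point $z$, which by definition of the operator is $\psi(z)\,f(\varphi(z))$.

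Next I would rewrite the scalar $f(\varphi(z))$ once more as an inner product, this time applying the reproducing property at the point $\varphi(z)$, so that $f(\varphi(z)) = \langle f, K_{\varphi(z)}\rangle$. Substituting this back yields $\langle f, C_{\psi,\varphi}^* K_z\rangle = \psi(z)\,\langle f, K_{\varphi(z)}\rangle$, and pulling the scalar out of the conjugate-linear second slot of the inner product produces $\langle f, \overline{\psi(z)}\,K_{\varphi(z)}\rangle$. Since $f$ was arbitrary, the vectors $C_{\psi,\varphi}^* K_z$ and $\overline{\psi(z)}\,K_{\varphi(z)}$ must coincide, which is exactly the asserted formula.

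There is no genuine obstacle here: the argument is routine once boundedness of $C_{\psi,\varphi}$ guarantees that $C_{\psi,\varphi}^*$ exists as a bounded operator, while the fact that $\varphi$ maps $\mathbb{B}_N$ into itself ensures $\varphi(z)\in\mathbb{B}_N$ so that $K_{\varphi(z)}$ is a legitimate element of the space. The only point requiring minor care is bookkeeping the complex conjugate, which appears precisely because the inner product is conjugate-linear in its second argument; this is what converts $\psi(z)$ into $\overline{\psi(z)}$ on the right-hand side.
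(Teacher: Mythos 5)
Your argument is correct and is essentially identical to the paper's own proof: both pair $C_{\psi,\varphi}^*K_z$ against an arbitrary $f$, apply the adjoint relation and the reproducing property at $z$ and at $\varphi(z)$, and read off the conjugated weight from the conjugate-linearity of the second slot. Nothing to add.
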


\begin{proof}
For any $f\in H^2(\beta,\mathbb{B}_N)$,
\begin{align*}
\langle f,C_{\psi,\varphi}^*K_z\rangle&=\langle C_{\psi,\varphi}f,K_z\rangle
\\&=\langle \psi\cdot f\comp\varphi,K_z\rangle
\\&=\psi(z)\cdot f(\varphi(z))
\\&=\langle f,\overline{\psi(z)}K_{\varphi(z)}\rangle.
\end{align*}
So
$$C_{\psi,\varphi}^*K_z=\overline{\psi(z)}K_{\varphi(z)}.$$
\end{proof}

\subsection{Some other notations}

Let $z$ be a point in $\mathbb{C}^N$, we will write
$$z=(z^{[1]},z^{[2]},...,z^{[N]}),$$
where $z^{[j]}$ is the $j$-th component of $z$. Also we will let $z'$ denote the last $N-1$ components of $z$, that is, $z=(z^{[1]},z')$ and
$$z'=(z^{[2]},z^{[3]},...,z^{[N]})\in\mathbb{C}^{N-1}.$$
By writing $e_1$ we mean the point $(1,0')$, whose first component is $1$ and other components are $0$.

Fix a $\varphi\in Aut(\mathbb{B}_N)$, we call the sequence of points $\{z_k\}_{k=-\infty}^{+\infty}$ an iteration sequence for $\varphi$ if $z_{k+1}=\varphi(z_k)$ for all $k\in\mathbb{Z}$.

For a operator $T$ on a Hilbert space, we use $\sigma(T)$ denote the spectrum of $T$ and $r(T)$ denote the spectral radius of $T$.

\section{Automorphisms with no fixed point in $\mathbb{B}_N$ and two fixed points on $S_N$}

\subsection{Preparations}

First, we assume temporarily that the Denjoy-Wolff point of $\varphi$ is $e_1=(1,0')$ and the other fixed points of $\varphi$ is $-e_1=(-1,0')$. The next lemma shows that this can benefit us much in simplifying the calculation.
\begin{lemma}
Suppose $\varphi\in Aut(\mathbb{B}_N)$ fixes the points $e_1$ and $-e_1$ only, with Denjoy-Wolff point $e_1$. Then
$$\varphi(z)=\left(\frac{z^{[1]}+s}{1+sz^{[1]}},U\frac{\sqrt{1-s^2}z'}{1+sz^{[1]}}\right)$$
for some $U$ unitary on $\mathbb{C}^{N-1}$ and $s\in(0,1)$.
\end{lemma}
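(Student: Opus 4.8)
The plan is to exploit the special role of the complex line through the two boundary fixed points and to reduce the problem to a one-variable classification plus a residual unitary. Write $D=\mathbb{B}_N\cap\mathbb{C}e_1=\{(\lambda,0'):\lambda\in\mathbb{D}\}$ for the affine slice of $\mathbb{B}_N$ joining $e_1$ and $-e_1$. The first and most delicate step is to show that $\varphi(D)=D$. Any $\varphi\in Aut(\mathbb{B}_N)$ is a rational biholomorphism, smooth up to $S_N$, that carries complex geodesics (the affine-line slices of $\mathbb{B}_N$) to complex geodesics; hence $\overline{\varphi(D)}$ is again the closure of such a slice, and it contains $\varphi(\pm e_1)=\pm e_1$. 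Since $\mathbb{C}e_1$ is the only complex affine line through $e_1$ and $-e_1$, the disc $D$ is the unique complex geodesic whose closure contains both points, and therefore $\varphi(D)=D$. This uniqueness and the invariance of the family of complex geodesics are the facts I would quote from \cite{Aba,CM}.

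Granting this, $\varphi|_D$ is an automorphism of $D\cong\mathbb{D}$ fixing the boundary points $1$ and $-1$, with $1$ attracting because $e_1$ is the Denjoy--Wolff point. An elementary one-variable computation shows that every automorphism of $\mathbb{D}$ fixing $\pm1$ has the form $\lambda\mapsto\frac{\lambda+s}{1+s\lambda}$ with $s\in(-1,1)$, and the attracting condition at $1$ forces $s\in(0,1)$. I would then introduce the candidate map
$$\Phi_s(z)=\left(\frac{z^{[1]}+s}{1+sz^{[1]}},\ \frac{\sqrt{1-s^2}\,z'}{1+sz^{[1]}}\right),$$
check that it is an automorphism of $\mathbb{B}_N$ through the direct computation
$$1-|\Phi_s(z)|^2=\frac{(1-s^2)\left(1-|z|^2\right)}{|1+sz^{[1]}|^2}>0\qquad(z\in\mathbb{B}_N),$$
with inverse $\Phi_{-s}$, and note that $\Phi_s$ fixes $\pm e_1$, maps $D$ into $D$, and acts there exactly as $\lambda\mapsto\frac{\lambda+s}{1+s\lambda}$.

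Finally I would peel off $\Phi_s$. Set $\psi=\Phi_s^{-1}\circ\varphi$. Since $\varphi$ and $\Phi_s$ agree on $D$, the map $\psi$ fixes $D$ pointwise; in particular $\psi(0)=0$, so by Cartan's rigidity theorem an automorphism of $\mathbb{B}_N$ fixing the origin is the restriction of a unitary $W$ of $\mathbb{C}^N$, giving $\psi=W$. Because $W$ is linear and fixes every point $(\lambda,0')$, it fixes $e_1$; hence $W$ preserves $e_1^{\perp}$ and has the block form $W(z^{[1]},z')=(z^{[1]},Uz')$ for some unitary $U$ on $\mathbb{C}^{N-1}$. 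Then $\varphi=\Phi_s\circ W$, and composing (using the linearity of $U$ to pull it out of the scalar factor) yields
$$\varphi(z)=\left(\frac{z^{[1]}+s}{1+sz^{[1]}},\ U\frac{\sqrt{1-s^2}\,z'}{1+sz^{[1]}}\right),$$
as claimed. The main obstacle is the invariance $\varphi(D)=D$ of the first paragraph; once the correct complex geodesic is identified, the remainder is the elementary disc classification together with Cartan's rigidity for maps fixing the origin.
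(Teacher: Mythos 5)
Your proof is correct, and it shares with the paper the one genuinely geometric step: both arguments hinge on the fact that the slice $\mathcal{M}=\{(z^{[1]},0')\}$ is the unique one--dimensional affine slice of $\mathbb{B}_N$ whose closure contains $e_1$ and $-e_1$, and that automorphisms permute such slices (Theorem 2.74 in \cite{CM}), so $\varphi(\mathcal{M})=\mathcal{M}$. Where you diverge is in the endgame. The paper factors $\varphi=\phi\comp\varphi_a$ with $a=\varphi^{-1}(0)\in\mathcal{M}$, $\varphi_a$ the standard involution exchanging $0$ and $a$, and $\phi$ unitary (again by Cartan, since it fixes $0$); it then writes out the composition explicitly and solves the two fixed--point equations $\varphi(\pm e_1)=\pm e_1$ to force $e^{i\theta}=-1$ and $\xi\in\mathbb{R}$, reading off $s=-\xi$ and $U=-V$. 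You instead classify $\varphi|_{\mathcal{M}}$ completely as the one--variable hyperbolic automorphism $\lambda\mapsto\frac{\lambda+s}{1+s\lambda}$, build the model map $\Phi_s$, and observe that $\Phi_s^{-1}\comp\varphi$ fixes the slice pointwise, hence is a unitary of block form $(z^{[1]},z')\mapsto(z^{[1]},Uz')$. Your route avoids solving the system of equations and makes the block structure of the residual unitary transparent; the paper's route is more computational but needs only the single fact that $\varphi^{-1}(0)$ lies on the slice rather than full pointwise agreement on it. Both ultimately rest on the same two external inputs (slice invariance and Cartan's theorem that an origin--fixing automorphism is unitary), and both extract $s\in(0,1)$ from the same derivative condition $\frac{1-s}{1+s}<1$ at the Denjoy--Wolff point. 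One small point worth making explicit in your write--up: $s\neq 0$ because otherwise $\varphi$ would fix $0\in\mathbb{B}_N$, contradicting the hypothesis that $\pm e_1$ are the only fixed points.
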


\begin{proof}
Let
$$\mathcal{M}=\left\{(z^{[1]},z')\in \mathbb{B}_N : z'=0\right\}.$$

Note that $\mathcal{M}$ is the only affine subset in $\mathbb{B}_N$ of dimension $1$ that contains $e_1$ and $-e_1$.

Since the image of any affine subset in $\mathbb{B}_N$ under $\varphi$ is also an affine subset in $\mathbb{B}_N$ of the same dimension ( see Theorem 2.74 in \cite{CM} ), $\varphi$ acts as an automorphism when it is restricted on $\mathcal{M}$. So $\varphi^{-1}(0)$ belongs to $\mathcal{M}$. Thus we can write $\varphi^{-1}(0)$ as $a=(\xi,0')$ for some $\xi\in D$. Then
$$\varphi_a(z)=\left(\frac{\xi-z^{[1]}}{1-\overline\xi z^{[1]}},\frac{-\sqrt{1-|\xi|^2}z'}{1-\overline\xi z^{[1]}}\right)$$
gives the involutive automorphism that exchange $0$ and $a$. So $\phi=\varphi\comp\varphi_a$ is an automorphism that fixes the point $0$, hence a unitary map. Moreover, since $\mathcal{M}$ is invariant under $\varphi$ and $\varphi_a$, it is also invariant under $\phi$, which means that
$$\phi(z)=(e^{i\theta}z^{[1]},Vz')$$
for some $\theta\in\mathbb{R}$ and $V$ unitary on $\mathbb{C}^{N-1}$. Therefore
$$\varphi(z)=\phi\comp\varphi_a(z)=\left(e^{i\theta}\frac{\xi-z^{[1]}}{1-\overline\xi z^{[1]}},-V\frac{\sqrt{1-|\xi|^2}z'}{1-\overline\xi z^{[1]}}\right).$$

By using the conditions $\varphi(e_1)=e_1$ and $\varphi(-e_1)=-e_1$, we get the following equations
$$\left\{\begin{array}{l}
   e^{i\theta}(\xi-1)=1-\overline\xi \\
   e^{i\theta}(\xi+1)=-1-\overline\xi. \\
   \end{array}\right.$$
So we have $e^{i\theta}=-1$ and $\xi=\overline\xi\in\mathbb{R}$. By taking $U=-V$ and $s=-\xi$, we get the expected expression of $\varphi$.

Finally, since $e_1$ is the Denjoy-Wolff point of $\varphi$,
$$\frac{\partial\varphi^{[1]}}{\partial z^{[1]}}(e_1)=\frac{1-s}{1+s}<1.$$
Thus we have $s\in(0,1)$.
\end{proof}

\begin{remark}
The lemma above shows that if $\varphi\in Aut(\mathbb{B}_N)$ fixes the points $e_1$ and $-e_1$ only, with Denjoy-Wolff point $e_1$, then
$$\varphi^{[1]}(z)=\frac{z^{[1]}+s}{1+sz^{[1]}}$$
for some $s\in(0,1)$. It is trivial to check that the inverse is also true.
\end{remark}

The next lemma is just a restatement of some familiar results by our notations.

\begin{lemma}
Suppose $\varphi\in Aut(\mathbb{B}_N)$ fixes the points $e_1$ and $-e_1$ only, with Denjoy-Wolff point $e_1$. Then
$$dv(\varphi(z))=\left(\frac{\partial\varphi^{[1]}}{\partial z^{[1]}}(z)\right)^{N+1}dv(z)$$
for all $z$ in $\mathbb{B}_N$ and
$$d\sigma(\varphi(\zeta))=\left(\frac{\partial\varphi^{[1]}}{\partial z^{[1]}}(\zeta)\right)^Nd\sigma(\zeta)$$
for all $\zeta$ on $S_N$.
\end{lemma}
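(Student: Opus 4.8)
The plan is to read both identities off the standard Jacobian change-of-variables formulas for automorphisms of $\mathbb{B}_N$ (the ``familiar results''), after identifying the derivative $\partial\varphi^{[1]}/\partial z^{[1]}$ with the usual automorphism Jacobian factor. Recall that for any $\psi\in Aut(\mathbb{B}_N)$ with $\psi^{-1}(0)=a$, the real Jacobian governing the volume measure is $\bigl(\tfrac{1-|a|^2}{|1-\langle z,a\rangle|^2}\bigr)^{N+1}$, while the corresponding factor for the surface measure on $S_N$ carries exponent $N$ instead of $N+1$ (these are classical facts; see Rudin's \emph{Function Theory in the Unit Ball of $\mathbb{C}^n$}, or \cite{CM}). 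So the entire content of the lemma is to rewrite the single factor $\tfrac{1-|a|^2}{|1-\langle z,a\rangle|^2}$ in the coordinate form $\bigl|\partial\varphi^{[1]}/\partial z^{[1]}\bigr|$.

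First I would locate $a$. From the proof of Lemma 3.1 we have $a=\varphi^{-1}(0)=(-s,0')$, so $|a|^2=s^2$, and since $s$ is real, $1-\langle z,a\rangle=1+sz^{[1]}$. Differentiating the explicit first component $\varphi^{[1]}(z)=\tfrac{z^{[1]}+s}{1+sz^{[1]}}$ of Lemma 3.1 gives
\[
\frac{\partial\varphi^{[1]}}{\partial z^{[1]}}(z)=\frac{1-s^2}{(1+sz^{[1]})^2},
\]
whose modulus is exactly $\tfrac{1-s^2}{|1+sz^{[1]}|^2}=\tfrac{1-|a|^2}{|1-\langle z,a\rangle|^2}$. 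Substituting this identity into the two classical Jacobian formulas yields the two displayed equalities simultaneously, with the understanding that $\partial\varphi^{[1]}/\partial z^{[1]}$ is read as its modulus so that both sides are genuine positive measures.

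As an independent check of the volume formula I would compute the complex Jacobian directly from Lemma 3.1. Because $\varphi^{[1]}$ depends on $z^{[1]}$ alone, the matrix $\bigl(\partial\varphi^{[j]}/\partial z^{[k]}\bigr)$ is block lower triangular (the top-right block vanishes), so its determinant is $\tfrac{\partial\varphi^{[1]}}{\partial z^{[1]}}$ times the determinant of the lower-right block $\tfrac{\sqrt{1-s^2}}{1+sz^{[1]}}U$; the latter has modulus $\bigl(\tfrac{\sqrt{1-s^2}}{|1+sz^{[1]}|}\bigr)^{N-1}$ since $|\det U|=1$. Using that a holomorphic change of variables multiplies $dv$ by $|J_{\mathbb C}\varphi|^2$, the product collapses to $\bigl|\partial\varphi^{[1]}/\partial z^{[1]}\bigr|^{N+1}$, matching the first identity.

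The only genuinely nontrivial point is the surface-measure identity: it is not the restriction of the holomorphic Jacobian but the Jacobian of $\varphi$ viewed as a diffeomorphism of the $(2N-1)$-real-dimensional sphere (automorphisms extend to boundary homeomorphisms, so $\varphi(\zeta)\in S_N$ for $\zeta\in S_N$), and it is precisely here that the exponent drops from $N+1$ to $N$. I would not recompute this from scratch but invoke the standard automorphism formula for $d\sigma$; the rest is the algebraic identification above. A secondary point to flag is notational: since $\partial\varphi^{[1]}/\partial z^{[1]}$ is complex off the real axis, the powers appearing in the statement must be interpreted as powers of its modulus.
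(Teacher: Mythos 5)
Your argument is correct, and for the volume identity it is essentially the paper's own proof: both of you compute the complex Jacobian matrix of the explicit form of $\varphi$ from Lemma 3.1, observe it is block triangular, and use that a holomorphic change of variables multiplies $dv$ by $|\det\varphi'|^{2}$, which collapses to $\bigl|\partial\varphi^{[1]}/\partial z^{[1]}\bigr|^{N+1}$. You are in fact slightly more careful than the paper on two points: the Jacobian matrix is block \emph{triangular} rather than diagonal (since $\partial\varphi^{[j]}/\partial z^{[1]}\neq 0$ for $j\geq 2$ when $z'\neq 0$), and the powers of $\partial\varphi^{[1]}/\partial z^{[1]}$ in the statement must be read as powers of its modulus, since that derivative is complex off the real slice; neither affects the determinant or the conclusion. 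Where you genuinely diverge is the surface-measure identity: you invoke the classical transformation formula $d\sigma(\varphi(\zeta))=\bigl(\tfrac{1-|a|^{2}}{|1-\langle\zeta,a\rangle|^{2}}\bigr)^{N}d\sigma(\zeta)$ with $a=\varphi^{-1}(0)=(-s,0')$ and identify that factor with $\bigl|\partial\varphi^{[1]}/\partial z^{[1]}\bigr|$, whereas the paper derives the drop in exponent from $N+1$ to $N$ by writing $dv$ in polar coordinates and computing the radial stretching factor $dr(\varphi(\zeta))/dr(\zeta)=\lim_{w\to\zeta}(1-|\varphi(w)|)/(1-|w|)$. Your route is shorter and rests on a standard reference; the paper's is self-contained modulo the volume formula, at the price of an informal polar-coordinate manipulation. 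Both are acceptable proofs of what the paper itself calls a restatement of familiar results.
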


\begin{proof}
By Remark 3.2, we can find $U$ unitary on $\mathbb{C}^{N-1}$ and $s\in(0,1)$ such that
$$\varphi(z)=\left(\frac{z^{[1]}+s}{1+sz^{[1]}},U\frac{\sqrt{1-s^2}z'}{1+sz^{[1]}}\right).$$

Then by a simple computation we get
$$\varphi'(z)=\left(
\begin{array}{ccc}
 \frac{1-s^2}{(1+sz^{[1]})^2} & 0 \\
 0 & \frac{\sqrt{1-s^2}}{1+sz^{[1]}}U
\end{array}
\right).$$
So
$$\det\varphi'(z)=\left(\frac{1-s^2}{(1+sz^{[1]})^2}\right)^{(N+1)/2}=\left(\frac{\partial\varphi^{[1]}}{\partial z^{[1]}}(z)\right)^{(N+1)/2}.$$
Therefore we have
$$dv(\varphi(z))=|\det\varphi'(z)|^2dv(z)=\left(\frac{\partial\varphi^{[1]}}{\partial z^{[1]}}(z)\right)^{N+1}dv(z).$$

Since $\varphi$ is holomorphic in a neighbourhood of $\overline{\mathbb{B}_N}$, the equation above also holds for any $\zeta$ on $S_N$. Thus in polar coordinate we can write
$$dr(\varphi(\zeta))d\sigma(\varphi(\zeta))=\left(\frac{\partial\varphi^{[1]}}{\partial z^{[1]}}(\zeta)\right)^{N+1}dr(\zeta)d\sigma(\zeta).$$
However,
\begin{align*}
\frac{dr(\varphi(\zeta))}{dr(\zeta)}&=\lim_{w\to\zeta}\frac{1-|\varphi(w)|}{1-|w|}
\\&=\lim_{w\to\zeta}\frac{1-|\varphi(w)|^2}{1-|w|^2}
\\&=\lim_{w\to\zeta}\frac{\partial\varphi^{[1]}}{\partial z^{[1]}}(w)=\frac{\partial\varphi^{[1]}}{\partial z^{[1]}}(\zeta).
\end{align*}
So
$$d\sigma(\varphi(\zeta))=\left(\frac{\partial\varphi^{[1]}}{\partial z^{[1]}}(\zeta)\right)^Nd\sigma(\zeta).$$
\end{proof}

In order to generalize our discussion to the cases when the fixed points of $\varphi$ are not $\pm e_1$, the next lemma is also needed.

\begin{lemma}
Suppose $\varphi\in Aut(\mathbb{B}_N)$, with no fixed point in $\mathbb{B}_N$, has two distinct fixed points $a\ne b$ on $S_N$ and $a$ is the Denjoy-Wolff point of $\varphi$. Then we can find $\phi\in Aut(\mathbb{B}_N)$ such that
$$\phi\comp\varphi\comp\phi^{-1}(z)=\left(\frac{z^{[1]}+s}{1+sz^{[1]}},U\frac{\sqrt{1-s^2}z'}{1+sz^{[1]}}\right)$$
for some $U$ unitary on $\mathbb{C}^{N-1}$ and $s\in(0,1)$.

Moreover, the spectral radius of the matrixs $\varphi'(a)^{-1}$ and $\varphi'(b)$ are both $\frac{1+s}{1-s}$
\end{lemma}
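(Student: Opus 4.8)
The plan is to first reduce $\varphi$ to the normal form of Lemma 3.1 by conjugating with a suitable automorphism, and then read off both spectral radii from the explicit derivative already computed in the proof of Lemma 3.3. The key observation is that every quantity in the statement is invariant under conjugating $\varphi$ by an element of $Aut(\mathbb{B}_N)$, so once I produce a $\phi$ carrying the fixed-point pair $(a,b)$ to $(e_1,-e_1)$, Lemma 3.1 supplies the desired form and the rest is bookkeeping.

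First I would construct $\phi$. Since $a\ne b$ lie on $S_N$, the complex line through them meets $\mathbb{B}_N$ in a one-dimensional affine disk $D$ whose boundary circle contains $a$ and $b$ (the open segment $\overline{ab}$ lies in $\mathbb{B}_N$ by convexity, so $D\ne\varnothing$). By Theorem 2.74 in \cite{CM} an automorphism carries affine disks to affine disks of the same dimension, and one checks that $Aut(\mathbb{B}_N)$ acts transitively on them (map a point of $D$ to $0$ by an involution, then rotate the resulting central slice onto $\mathcal{M}$ by a unitary). Thus I can choose $\phi_1\in Aut(\mathbb{B}_N)$ with $\phi_1(D)=\mathcal{M}$, so that $\phi_1(a)=(\mu,0')$ and $\phi_1(b)=(\nu,0')$ for distinct $\mu,\nu\in\partial\mathbb{D}$. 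Because $Aut(\mathbb{D})$ is transitive on pairs of distinct boundary points, I pick a disc automorphism $\tau$ with $\tau(\mu)=1$ and $\tau(\nu)=-1$; writing $\tau$ as a rotation composed with an involution and extending each factor to $\mathbb{B}_N$ through the formulas in the proof of Lemma 3.1 (unitaries and the involution $\varphi_{(p,0')}$), I obtain $\hat\tau\in Aut(\mathbb{B}_N)$ preserving $\mathcal{M}$ and acting as $\tau$ on the first coordinate. Setting $\phi=\hat\tau\comp\phi_1$ then gives $\phi(a)=e_1$ and $\phi(b)=-e_1$.

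Next I would pass to $\tilde\varphi=\phi\comp\varphi\comp\phi^{-1}$. Conjugation preserves fixed points, so $\tilde\varphi$ fixes exactly $e_1$ and $-e_1$ on $S_N$ with no interior fixed point; moreover $\tilde\varphi_k=\phi\comp\varphi_k\comp\phi^{-1}\to\phi(a)=e_1$ on compact sets, so $e_1$ is its Denjoy-Wolff point, and Lemma 3.1 yields the displayed normal form. For the spectral radii, the chain rule at the two fixed points gives
$$\tilde\varphi'(e_1)=\phi'(a)\,\varphi'(a)\,\phi'(a)^{-1},\qquad \tilde\varphi'(-e_1)=\phi'(b)\,\varphi'(b)\,\phi'(b)^{-1},$$
so $\varphi'(a)$ is similar to $\tilde\varphi'(e_1)$ and $\varphi'(b)$ to $\tilde\varphi'(-e_1)$, whence the spectral radii agree. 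Finally I substitute $z=\pm e_1$ into the block-diagonal derivative
$$\tilde\varphi'(z)=\begin{pmatrix}\frac{1-s^2}{(1+sz^{[1]})^2}&0\\0&\frac{\sqrt{1-s^2}}{1+sz^{[1]}}U\end{pmatrix}$$
from the proof of Lemma 3.3. At $e_1$ the eigenvalues are $\frac{1-s}{1+s}$ together with $\sqrt{\frac{1-s}{1+s}}$ times the unimodular eigenvalues of $U$, so $\tilde\varphi'(e_1)^{-1}$ (hence $\varphi'(a)^{-1}$) has spectral radius $\frac{1+s}{1-s}$; at $-e_1$ the eigenvalues are $\frac{1+s}{1-s}$ together with $\sqrt{\frac{1+s}{1-s}}$ times unimodular numbers, whose largest modulus is again $\frac{1+s}{1-s}$. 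This yields $r(\varphi'(a)^{-1})=r(\varphi'(b))=\frac{1+s}{1-s}$.

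The main obstacle is the geometric step of producing $\phi$; everything afterward is the chain rule plus the explicit derivative of Lemma 3.3. The genuine care lies in justifying transitivity of $Aut(\mathbb{B}_N)$ on one-dimensional affine disks and in extending the chosen disc automorphism $\tau$ to a ball automorphism preserving the slice $\mathcal{M}$, both of which rest on Theorem 2.74 of \cite{CM} and the involution formula of Lemma 3.1. One should also verify that conjugation truly transports the Denjoy-Wolff point, using that automorphisms extend holomorphically past $S_N$ so that the convergence $\varphi_k\to a$ survives composition with $\phi$.
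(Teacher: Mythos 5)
Your proposal is correct and follows essentially the same route as the paper: conjugate the fixed-point pair $(a,b)$ to $(\pm e_1)$, invoke Lemma 3.1 for the normal form, and obtain the spectral radii from the chain-rule similarity $\widehat{\varphi}'(e_1)=\phi'(a)\varphi'(a)\phi'(a)^{-1}$ together with the explicit block-diagonal derivative. The only difference is that you construct the conjugating automorphism $\phi$ by hand (via the affine disk through $a$ and $b$ and a disc automorphism extended to the ball), whereas the paper simply cites Corollary 2.2.5 in Abate for the existence of $\phi$ with $\phi(a)=e_1$, $\phi(b)=-e_1$; your construction is a valid, self-contained substitute for that citation.
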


\begin{proof}
According to Corollary 2.2.5 in \cite{Aba}, we can find $\phi\in Aut(\mathbb{B}_N)$ such that $\phi(a)=e_1$ and  $\phi(b)=-e_1$. Then $\widehat{\varphi}=\phi\comp\varphi\comp\phi^{-1}$ is an automorphism of $\mathbb{B}_N$ that fixes $\pm e_1$ only, and the Denjoy-Wolff point of $\widehat{\varphi}$ is $e_1$. Thus by Lemma 2.1 we can write
$$\widehat{\varphi}(z)=\left(\frac{z^{[1]}+s}{1+sz^{[1]}},U\frac{\sqrt{1-s^2}z'}{1+sz^{[1]}}\right)$$
for some $U$ unitary on $\mathbb{C}^{N-1}$ and $s\in (0,1)$.

Since both $\varphi$ and $\phi$ are holomorphic in a neighbourhood of $\overline{\mathbb{B}_N}$, we have
$$\widehat{\varphi}'(e_1)=\phi'(a)\cdot\varphi'(a)\cdot(\phi^{-1})'(e_1)=\phi'(a)\cdot\varphi'(a)\cdot\phi'(a)^{-1}.$$
So $\varphi'(a)$, as a matrix, is similar to $\widehat{\varphi}'(e_1)$. Likewise, $\varphi'(b)$ is similar to $\widehat{\varphi}'(-e_1)$.

By a simple computation, we get
$$\widehat{\varphi}'(e_1)=\left(
\begin{array}{ccc}
 \frac{1-s}{1+s} & 0 \\
 0 & \sqrt{\frac{1-s}{1+s}}U
\end{array}
\right)$$
and
$$\widehat{\varphi}'(-e_1)=\left(
\begin{array}{ccc}
 \frac{1+s}{1-s} & 0 \\
 0 & \sqrt{\frac{1+s}{1-s}}U
\end{array}
\right).$$

Since $U$ is unitary, $\frac{1+s}{1-s}$ is the spectral radius of $\widehat{\varphi}'(-e_1)$ and $\widehat{\varphi}'(e_1)^{-1}$, hence by the similarity, is also the spectral radius of $\varphi'(b)$ and $\varphi'(a)^{-1}$.
\end{proof}

\subsection{Spectral radius}

In this section, we find out the spectral radius of $C_{\psi,\varphi}$ by, of course, estimating the norm of $C_{\psi,\varphi}^n=C_{\psi_{(n)},\varphi_n}$. For this purpose, we will turn to the Siegel upper half space. Recall that the Siegel upper half space $H_N$ is defined by
$$H_N=\{w\in\mathbb{C}^N : \text{Im } w^{[1]}>|w'|^2\},$$
and the Cayley transform $\Phi$ given by
$$\Phi(z)=i\frac{e_1+z}{1-z^{[1]}}$$
is a biholomorphism between $\mathbb{B}_N$ and $H_N$. Moreover, $\Phi$ extends to a homeomorphism of $\overline{\mathbb{B}_N}$ onto $H_N\cup\partial H_N\cup\{\infty\}$.

If $\varphi$ fixes the points $e_1$ and $-e_1$ only, then $\sigma=\Phi\comp\varphi\comp\Phi^{-1}$ is a automorphism of $H_N$ that fixs $0$ and $\infty$ only. By Proposition 2.2.11 in \cite{Aba}, we can write
$$\sigma(w)=(r^2w^{[1]},rUw')$$
where $U$ is unitary on $\mathbb{C}^{N-1}$ and $0<r<1$.

Suppose $\psi\in A(\mathbb{B}_N)$ is bounded away from zero, then for any $\epsilon>0$ we can find neighborhoods $V_1$ and $V_2$ of $e_1$ and $-e_1$ respectively such that
$$|\psi(z)|\leqslant (1+\epsilon)\text{max}\{|\psi(e_1)|,|\psi(-e_1)|\}$$
for all $z\in(V_1\cup V_2)\cap S_N$.

Since $\Phi(e_1)=0$ and $\Phi(-e_1)=\infty$, we can find $R>1$ such than $\{w\in\partial H_N : |w|<\frac{1}{R}\}\subset \Phi(V_1\cap S_N)$ and $\{w\in\partial H_N : |w|>R\}\subset \Phi(V_2\cap S_N)$. By noticing that$|\sigma(w)|\leqslant r|w|$, we can find $n_0\in\mathbb{N}^*$ such that
$$\sigma_n(w)\in\{w\in\partial H_N : |w|<\frac{1}{R}\}\subset\Phi(V_1\cap S_N)$$
whenever $n>n_0$ and
$$w\in\Phi \left(S_N\backslash(V_1\cup V_2)\right)\subset\{w\in\partial H_N : \frac{1}{R}\leqslant|w|\leqslant R\},$$
which means that $\varphi_n(z)\in V_1\cap S_N$ whenever $z\in S_N\backslash(V_1\cup V_2)$. So for all $z\in S_N$, at most $n_0$ elements of the set $\{\varphi_j(z) : j=0,1,2,...\}$ are not contained in $(V_1\cup V_2)\cap S_N$. Thus for $n>n_0$,
\begin{align*}
||\psi_{(n)}||_{\infty}&=\max_{|z|=1}\left(\prod_{j=0}^{n-1}|\psi\comp\varphi_j(z)|\right) \\&\leqslant||\psi||_{\infty}^{n_0}\left[(1+\epsilon)\text{max}\{|\psi(e_1)|,|\psi(-e_1)|\}\right]^{n-n_0}.
\end{align*}
By the arbitrariness of $\epsilon$,
$$\overline{\lim_{n\to\infty}}||\psi_{(n)}||_{\infty}^{1/n}\leqslant\text{max}\{|\psi(e_1)|,|\psi(-e_1)|\}.$$

However, since $\psi_{(n)}(e_1)=\psi(e_1)^n$ and $\psi_{(n)}(-e_1)=\psi(-e_1)^n$, we have
$$||\psi_{(n)}||_{\infty}^{1/n}\geqslant\text{max}\{|\psi(e_1)|,|\psi(-e_1)|\}.$$
Thus we can get
$$\lim_{n\to\infty}||\psi_{(n)}||_{\infty}^{1/n}=\text{max}\{|\psi(e_1)|,|\psi(-e_1)|\}.$$

This limitation plays a critical role in our next lemma.

\begin{lemma}
Suppose $\varphi\in Aut(\mathbb{B}_N)$ where
$$\varphi^{[1]}(z)=\frac{z^{[1]}+s}{1+sz^{[1]}}\quad,\quad 0<s<1$$
and $\psi\in A(\mathbb{B}_N)$ is bounded away from zero. Then the spectral radius of $C_{\psi,\varphi}$ on $H^2(\beta,\mathbb{B}_N)$, for $\beta(n)=(n+1)^{1-\gamma}$ with $\gamma\geqslant1$, is no larger than $$\max\left\{|\psi(e_1)|\left(\frac{1+s}{1-s}\right)^K,|\psi(-e_1)|\left(\frac{1-s}{1+s}\right)^K\right\},$$
where $K=(N-1+\gamma)/2$.
\end{lemma}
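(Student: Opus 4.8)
The plan is to estimate $\|C_{\psi,\varphi}^n\|=\|C_{\psi_{(n)},\varphi_n}\|$ and then pass to $r(C_{\psi,\varphi})=\lim_{n}\|C_{\psi,\varphi}^n\|^{1/n}$. Since $\varphi_n$ is again an automorphism of the type treated in Lemma 3.3, the change-of-variables formulas there apply to it. Writing $\rho(z)=\frac{1-|\varphi(z)|^2}{1-|z|^2}=\frac{1-s^2}{|1+sz^{[1]}|^2}$ (a positive quantity extending continuously to $S_N$) and $\rho_n(z)=\frac{1-|\varphi_n(z)|^2}{1-|z|^2}$, I would substitute $w=\varphi_n(\zeta)$ in the integral defining $\|C_{\psi_{(n)},\varphi_n}f\|^2$. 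For $H^2(\mathbb{B}_N)$ this is an integral over $S_N$ and the surface-measure Jacobian contributes $\rho_n^{-N}$; for $A_\alpha^2(\mathbb{B}_N)$ it is an integral over $\mathbb{B}_N$ and the volume Jacobian together with the factor $(1-|z|^2)^\alpha$ contributes $\rho_n^{-(N+1+\alpha)}$. In either case the exponent is exactly $-2K$, and bounding the resulting weighted integral by its supremum gives
\[
\|C_{\psi_{(n)},\varphi_n}\|\le\sup_{\zeta}|\psi_{(n)}(\zeta)|\,\rho_n(\zeta)^{-K},
\]
the supremum being over $S_N$ in the Hardy case and over $\mathbb{B}_N$ (equivalently $\overline{\mathbb{B}_N}$, by continuity) in the Bergman case.

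The next step is a cocycle factorization. From $\psi_{(n)}=\prod_{j=0}^{n-1}\psi\circ\varphi_j$ and the telescoping identity $\rho_n(\zeta)=\prod_{j=0}^{n-1}\rho(\varphi_j(\zeta))$, the right-hand side factors as $\sup_\zeta\prod_{j=0}^{n-1}g(\varphi_j(\zeta))$, where $g:=|\psi|\,\rho^{-K}$ is continuous and bounded on $\overline{\mathbb{B}_N}$ (since $1-s\le|1+s\zeta^{[1]}|\le1+s$ keeps $\rho$ bounded above and below). A direct computation gives $g(e_1)=|\psi(e_1)|\left(\frac{1+s}{1-s}\right)^K$ and $g(-e_1)=|\psi(-e_1)|\left(\frac{1-s}{1+s}\right)^K$; writing $M$ for their maximum, which is precisely the bound to be proved, it suffices to show $\limsup_n\big(\sup_\zeta\prod_{j=0}^{n-1}g(\varphi_j(\zeta))\big)^{1/n}\le M$.

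For this I would reuse the localization set up just before the lemma. Given $\epsilon>0$, choose neighborhoods $V_1\ni e_1$ and $V_2\ni -e_1$ so small that $g\le(1+\epsilon)M$ on $(V_1\cup V_2)\cap\overline{\mathbb{B}_N}$, which is possible because $g$ is continuous and $g(\pm e_1)\le M$. Passing to the Siegel model $\sigma=\Phi\circ\varphi\circ\Phi^{-1}$, $\sigma(w)=(r^2w^{[1]},rUw')$ with $0<r<1$, the estimate $|\sigma(w)|\le r|w|$ holds for all $w\in\overline{H_N}$, so each orbit norm decreases monotonically and the orbit lies in the middle region $\overline{\mathbb{B}_N}\setminus(V_1\cup V_2)$ for at most a fixed number $n_0$ of steps, uniformly in the starting point. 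Hence for every $\zeta$ at most $n_0$ of the factors $g(\varphi_j(\zeta))$, $0\le j<n$, can exceed $(1+\epsilon)M$, and those are bounded by $\|g\|_\infty$; therefore $\prod_{j=0}^{n-1}g(\varphi_j(\zeta))\le\|g\|_\infty^{\,n_0}\big((1+\epsilon)M\big)^{n-n_0}$. Taking the supremum over $\zeta$, then the $n$-th root, letting $n\to\infty$ and finally $\epsilon\to0$ yields the claim.

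The main obstacle I anticipate is making the localization uniform over the \emph{whole} ball rather than only over $S_N$: the argument preceding the lemma counts stray iterates only for boundary points, whereas the Bergman norm requires the supremum over $\mathbb{B}_N$. The point to verify carefully is that the monotonicity $|\sigma(w)|\le r|w|$ in the Siegel coordinates forces any orbit to cross the fixed annular middle region in at most $\lceil\log R^2/\log(1/r)\rceil$ steps regardless of where it starts, so a single $n_0$ works for all $\zeta\in\overline{\mathbb{B}_N}$. A secondary technical point is the modulus-versus-holomorphic-derivative distinction in Lemma 3.3: in the change of variables one must use the genuine positive Jacobian $\rho_n$, which coincides with $\frac{\partial\varphi_n^{[1]}}{\partial z^{[1]}}$ only on the real slice through $\pm e_1$, and it is exactly the values of $\rho$ at the two fixed points that produce the two competing terms of the asserted bound.
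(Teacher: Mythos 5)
Your argument is correct and follows essentially the same route as the paper: the paper factors $C_{\psi_{(n)},\varphi_n}=M_{\upsilon_{(n)}}C_{\nu,\varphi_n}$ with $\upsilon=\psi\,(\partial\varphi^{[1]}/\partial z^{[1]})^{-K}$, shows $\|C_{\nu,\varphi_n}\|=1$ by the same change of variables from Lemma 3.3, and applies the same orbit-localization limit (the argument preceding the lemma) to $\upsilon$, which is exactly your $g$ in modulus, so your change-of-variables-then-telescope presentation is the same computation in a different order. The only minor divergence is that the paper takes the supremum of the holomorphic $\upsilon_{(n)}$ over $S_N$ (implicitly via the maximum principle), whereas you control the interior supremum directly through the monotonicity $|\sigma(w)|\leqslant r|w|$ in Siegel coordinates --- both adequately settle the technical point you flag.
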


\begin{proof}
For any fixed $n\in\mathbb{N}^*$,
\begin{align*}
\psi_{(n)}=\prod_{j=0}^{n-1}\psi\comp\varphi_j&=\left(\prod_{j=0}^{n-1}\frac{\psi\comp\varphi_j}{\left(\frac{\partial\varphi^{[1]}}{\partial z^{[1]}}\comp\varphi_j\right)^K}\right)\left(\prod_{j=0}^{n-1}\frac{\partial\varphi^{[1]}}{\partial z^{[1]}}\comp\varphi_j\right)^K\\&=\left(\prod_{j=0}^{n-1}\frac{\psi}{\left(\partial\varphi^{[1]}/\partial z^{[1]}\right)^K}\comp\varphi_j\right)\left(\frac{\partial\varphi_n^{[1]}}{\partial z^{[1]}}\right)^K.
\end{align*}

Let
$$\upsilon=\frac{\psi}{\left(\partial\varphi^{[1]}/\partial z^{[1]}\right)^K}\quad,\quad \nu=\left(\frac{\partial\varphi_n^{[1]}}{\partial z^{[1]}}\right)^K,$$
then $\psi_{(n)}=\upsilon_{(n)}\cdot\nu$. So
\begin{align*}
||C_{\psi,\varphi}^n||^{1/n}=||C_{\psi_{(n)},\varphi_n}||^{1/n}&=||M_{\upsilon_{(n)}}C_{\nu,\varphi_n}||^{1/n}
\\&\leqslant||\upsilon_{(n)}||_\infty^{1/n}\cdot||C_{\nu,\varphi_n}||^{1/n}
\end{align*}

Since $\upsilon\in A(\mathbb{B}_N)$ is bounded away from zero, the argument before the theorem shows that
\begin{align*}
\lim_{n\to\infty}||\upsilon_{(n)}||_{\infty}^{1/n}&=\max\left\{|\upsilon(e_1)|,|\upsilon(-e_1)|\right\}\\ &=\max\left\{|\psi(e_1)|\left(\frac{1+s}{1-s}\right)^K,|\psi(-e_1)|\left(\frac{1-s}{1+s}\right)^K\right\}.
\end{align*}

Now we estimate the norm of $C_{\nu,\varphi_n}$, we treat the situations whether $\gamma=1$ or not separately.

When $\gamma=1$, the space is $H^2(\mathbb{B}_N)$. So for any $f$ in the space,
\begin{align*}
||C_{\nu,\varphi_n}f||^2&=\int_{S_N}|f\comp\varphi_n(z)|^2|\nu(z)|^2d\sigma(z) \\&=\int_{S_N}|f\comp\varphi_n(z)|^2\left|\frac{\partial\varphi_n^{[1]}}{\partial z^{[1]}}(z)\right|^Nd\sigma(z).
\end{align*}
Since $\varphi_n$ also fixes $\pm e_1$ only, by taking $w=\varphi_n(z)$, Lemma 3.3 shows that
$$||C_{\nu,\varphi_n}f||^2=\int_{S_N}|f(w)|^2d\sigma(w)=||f||^2.$$
Thus we have $||C_{\nu,\varphi_n}||=1$.

When $\gamma>1$, $H^2(\beta,\mathbb{B}_N)$ is equipped with the norm on the weighted Bergman space $A^2_{\gamma-2}(\mathbb{B}_N)$. So for any $f\in H^2(\beta,\mathbb{B}_N)$,
\begin{align*}
||C_{\nu,\varphi_n}f||^2&=\int_{\mathbb{B}_N}c_{\gamma-2}|f\comp\varphi_n(z)|^2|\nu(z)|^2(1-|z|^2)^{\gamma-2}dv(z) \\&=\int_{\mathbb{B}_N}c_{\gamma-2}|f\comp\varphi_n(z)|^2(1-|z|^2)^{\gamma-2}\left|\frac{\partial\varphi_n^{[1]}}{\partial z^{[1]}}(z)\right|^{N-1+\gamma}dv(z)
\\&=\int_{\mathbb{B}_N}c_{\gamma-2}|f\comp\varphi_n(z)|^2(1-|\varphi_n(z)|^2)^{\gamma-2}\left|\frac{\partial\varphi_n^{[1]}}{\partial z^{[1]}}(z)\right|^{N+1}dv(z).
\end{align*}
The last equality above follows from the fact that
$$\left|\frac{\partial\varphi_n^{[1]}}{\partial z^{[1]}}(z)\right|=\frac{1-|\varphi_n(z)|^2}{1-|z|^2}.$$
Again by Lemma 3.3, taking $w=\varphi_n(z)$ we get
$$||C_{\nu,\varphi_n}f||^2=\int_{\mathbb{B}_N}c_{\gamma-2}|f(w)|^2(1-|w|^2)^{\gamma-2}dv(w)=||f||^2.$$

Thus for any $n\in\mathbb{N}^*$ and $\gamma\geqslant1$, the norm of $C_{\nu,\varphi_n}$ on $H^2(\beta,\mathbb{B}_N)$ is $1$. So
\begin{align*}
r(C_{\psi,\varphi})&=\lim_{n\to\infty}||C_{\psi,\varphi}^n||^{1/n}
\\&\leqslant\lim_{n\to\infty}||\upsilon_{(n)}||_{\infty}^{1/n}\cdot||C_{\nu,\varphi_n}||^{1/n}
\\&=\max\left\{|\psi(e_1)|\left(\frac{1+s}{1-s}\right)^K,|\psi(-e_1)|\left(\frac{1-s}{1+s}\right)^K\right\}.
\end{align*}

\end{proof}

\begin{corollary}
Suppose $\varphi\in Aut(\mathbb{B}_N)$ where
$$\varphi^{[1]}(z)=\frac{z^{[1]}+s}{1+sz^{[1]}}\quad,\quad 0<s<1$$
and $\psi\in A(\mathbb{B}_N)$ is bounded away from zero. Then the spectrum of $C_{\psi,\varphi}$ on $H^2(\beta,\mathbb{B}_N)$, for $\beta(n)=(n+1)^{1-\gamma}$ with $\gamma\geqslant1$, is contained in the annulus
$$\left\{\lambda : \min\left\{|\psi(e_1)|\rho^K,|\psi(-e_1)|\rho^{-K}\right\} \leqslant\lambda\leqslant \max\left\{|\psi(e_1)|\rho^K,|\psi(-e_1)|\rho^{-K}\right\}\right\}$$
where $\rho=\frac{1+s}{1-s}$ and $K=(N-1+\gamma)/2$.
\end{corollary}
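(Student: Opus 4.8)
The plan is to obtain the two radii of the annulus from two applications of Lemma 3.5, the second one applied to the inverse operator. Since $\psi$ is bounded and bounded away from zero, $C_{\psi,\varphi}$ is invertible by Proposition 2.3, so $0\notin\sigma(C_{\psi,\varphi})$. The outer radius is then immediate: Lemma 3.5 gives
$$r(C_{\psi,\varphi})\leqslant\max\left\{|\psi(e_1)|\rho^K,|\psi(-e_1)|\rho^{-K}\right\},$$
where $\rho=\frac{1+s}{1-s}$, so every $\lambda\in\sigma(C_{\psi,\varphi})$ satisfies $|\lambda|\leqslant\max\{|\psi(e_1)|\rho^K,|\psi(-e_1)|\rho^{-K}\}$, which is precisely the outer bound of the annulus.

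For the inner radius I would exploit that $\sigma(C_{\psi,\varphi}^{-1})=\{1/\lambda:\lambda\in\sigma(C_{\psi,\varphi})\}$, so an upper bound on $r(C_{\psi,\varphi}^{-1})$ forces $|\lambda|\geqslant 1/r(C_{\psi,\varphi}^{-1})$ for every $\lambda\in\sigma(C_{\psi,\varphi})$. By Proposition 2.3 one has $C_{\psi,\varphi}^{-1}=C_{1/(\psi\comp\varphi^{-1}),\varphi^{-1}}$, and the goal is to bound its spectral radius again by Lemma 3.5.

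The main obstacle is that $\varphi^{-1}$ fails the hypothesis of Lemma 3.5: its Denjoy-Wolff point is $-e_1$, and its first component $\frac{z^{[1]}-s}{1-sz^{[1]}}$ corresponds to the parameter $-s\notin(0,1)$. I would circumvent this by conjugating with the unitary involution $\tau(z)=(-z^{[1]},z')$ that interchanges $e_1$ and $-e_1$: a short computation shows $\tau\comp\varphi^{-1}\comp\tau$ has first component $\frac{z^{[1]}+s}{1+sz^{[1]}}$ with $s\in(0,1)$, so Lemma 3.5 applies to it, while $C_\tau$ is an isometry and, by Proposition 2.2, $C_\tau C_{1/(\psi\comp\varphi^{-1}),\varphi^{-1}}C_\tau=C_{(1/(\psi\comp\varphi^{-1}))\comp\tau,\ \tau\comp\varphi^{-1}\comp\tau}$, so the conjugation preserves the spectral radius. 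Evaluating the conjugated weight at $\pm e_1$ and using $\varphi^{-1}(\pm e_1)=\pm e_1$ gives the values $1/\psi(-e_1)$ at $e_1$ and $1/\psi(e_1)$ at $-e_1$, so Lemma 3.5 yields
$$r(C_{\psi,\varphi}^{-1})\leqslant\max\left\{\frac{\rho^K}{|\psi(-e_1)|},\frac{\rho^{-K}}{|\psi(e_1)|}\right\}.$$
Taking reciprocals gives $|\lambda|\geqslant\min\{|\psi(e_1)|\rho^K,|\psi(-e_1)|\rho^{-K}\}$ for all $\lambda\in\sigma(C_{\psi,\varphi})$, which is the inner bound, and together with the outer estimate this proves the claimed containment. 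The only genuinely delicate point is the fixed-point swap handled by $\tau$; everything else is routine bookkeeping with Propositions 2.2 and 2.3 and Lemma 3.5.
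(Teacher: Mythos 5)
Your proposal is correct and follows essentially the same route as the paper: the outer radius from Lemma 3.5 directly, and the inner radius by applying Lemma 3.5 to a unitary conjugate of $C_{\psi,\varphi}^{-1}=C_{\frac{1}{\psi\comp\varphi^{-1}},\varphi^{-1}}$ that swaps the roles of $e_1$ and $-e_1$. The only (immaterial) difference is your choice of the swapping unitary $\tau(z)=(-z^{[1]},z')$ where the paper uses $\phi(z)=-z$.
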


\begin{proof}
By Lemma 3.5, the spectrum of $C_{\psi,\varphi}$ is contained in the disk
$$\left\{\lambda : |\lambda|\leqslant\max\left\{|\psi(e_1)|\rho^K,|\psi(-e_1)|\rho^{-K}\right\}\right\}.$$

In order to determine the inner radius, let
$$\widehat{\varphi}=\phi\comp\varphi^{-1}\comp\phi^{-1}\quad, \quad\widehat{\psi}=\frac{1}{\psi\comp\varphi^{-1}}\comp\phi^{-1},$$
where $\phi(z)=-z$. Then it is easy to check that
$$\widehat{\varphi}^{[1]}(z)=\varphi^{[1]}(z)=\frac{z^{[1]}+s}{1+sz^{[1]}}$$
and $\Tilde{\psi}\in A(\mathbb{B}_N)$ is also bounded away from zero. Moreover, since $$C_{\psi,\varphi}^{-1}=C_{\frac{1}{\psi\comp\varphi^{-1}},\varphi^{-1}},$$
for any $f\in H^2(\beta,\mathbb{B}_N)$,
$$C_{\phi}^{-1}C_{\psi,\varphi}^{-1}C_{\phi}(f)=\widehat{\psi}\cdot f\comp\widehat{\varphi}=C_{\widehat{\psi},\widehat{\varphi}}(f).$$

Thus $C_{\psi,\varphi}^{-1}$ is similar to $C_{\widehat{\psi},\widehat{\varphi}}$. So the spectrum of $C_{\psi,\varphi}^{-1}$, which is also the spectrum of $C_{\widehat{\psi},\widehat{\varphi}}$, is contained in the disk
\begin{align*}
&\left\{\lambda : |\lambda|\leqslant\max\left\{|\widehat{\psi}(e_1)|\rho^K,|\widehat{\psi}(-e_1)|\rho^{-K}\right\}\right\}
\\=&\left\{\lambda : |\lambda|\leqslant\max\left\{\frac{1}{|\psi(-e_1)|}\rho^K,\frac{1}{|\psi(e_1)|}\rho^{-K}\right\}\right\}.
\end{align*}
This means that the spectrum of $C_{\psi,\varphi}$ is contained in the set
$$\left\{\lambda : |\lambda|\geqslant\min\left\{|\psi(e_1)|\rho^K,|\psi(-e_1)|\rho^{-K}\right\}\right\}.$$
\end{proof}

\subsection{The spectra}

Next lemma shows that the composition operators we consider here are circularly symmetric.

\begin{lemma}
Suppose $\varphi\in Aut(\mathbb{B}_N)$ where
$$\varphi^{[1]}(z)=\frac{z^{[1]}+s}{1+sz^{[1]}}\quad,\quad 0<s<1$$
and $\psi\in A(\mathbb{B}_N)$ is bounded away from zero. If $\lambda\in\sigma(C_{\psi,\varphi})$, then for any $\theta\in\mathbb{R}$, $e^{i\theta}\lambda\in\sigma(C_{\psi,\varphi})$.
\end{lemma}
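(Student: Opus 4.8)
The aim is to turn membership of $\lambda$ in $\sigma(C_{\psi,\varphi})$ into membership of $e^{i\theta}\lambda$. The cleanest conceivable route would be a family of invertible operators $S_\theta$ with $S_\theta C_{\psi,\varphi}S_\theta^{-1}=e^{i\theta}C_{\psi,\varphi}$, since then $\sigma(C_{\psi,\varphi})=\sigma(e^{i\theta}C_{\psi,\varphi})=e^{i\theta}\sigma(C_{\psi,\varphi})$. In the Siegel picture already set up, $\sigma=\Phi\circ\varphi\circ\Phi^{-1}$ is the dilation $w\mapsto(r^2w^{[1]},rUw')$, and on the Paley--Wiener model of $H^2(\beta,\mathbb{B}_N)$ the pure composition operator $C_\sigma$ becomes a dilation, which the Mellin multiplier unitaries conjugate to $e^{ic\kappa}C_\sigma$; this already exhibits circular symmetry for the \emph{unweighted} part. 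The trouble is that these gauge unitaries do not commute with $M_\psi$, so the same conjugation does not preserve the weight, and I do not expect an exact $S_\theta$ for general $\psi$. I will therefore argue instead at the level of approximate eigenvectors, using the kernel action of the adjoint.

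Fix an iteration sequence $\{z_k\}_{k\in\mathbb{Z}}$, $z_{k+1}=\varphi(z_k)$, with $z_k\to e_1$ as $k\to+\infty$ and $z_k\to -e_1$ as $k\to-\infty$. By Proposition 2.6, $C_{\psi,\varphi}^*K_{z_k}=\overline{\psi(z_k)}\,K_{z_{k+1}}$, so on the family $\{K_{z_k}\}$ the adjoint acts as a weighted shift. If $\lambda\in\partial\sigma(C_{\psi,\varphi})$ then $\bar\lambda\in\partial\sigma(C_{\psi,\varphi}^*)\subseteq\sigma_{ap}(C_{\psi,\varphi}^*)$, and I would build near-null vectors of $C_{\psi,\varphi}^*-\bar\lambda$ of the form $g=\sum_k a_k K_{z_k}$, where on a long block the coefficients solve the shift recursion $a_{k-1}\overline{\psi(z_{k-1})}=\bar\lambda\,a_k$ and are tapered off outside it. The decisive algebraic point is that the substitution $a_k\mapsto e^{ik\theta}a_k$ converts this into the recursion for the eigenvalue $e^{i\theta}\lambda$, since $e^{i(k-1)\theta}a_{k-1}\overline{\psi(z_{k-1})}=e^{-i\theta}\bar\lambda\,(e^{ik\theta}a_k)$. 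Thus the phase twist rotates the approximate eigenvalue by $e^{i\theta}$ while leaving the support of the coefficients unchanged.

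The main obstacle is that $\{K_{z_k}\}$ is not orthogonal, so this twist is not literally unitary, and I must check that it preserves both $\|g\|$ and the smallness of $\|(C_{\psi,\varphi}^*-\bar\lambda)g\|$ up to factors tending to $1$. Here the orbit geometry is exactly what is needed: writing $k_{z_k}=K_{z_k}/\|K_{z_k}\|$, the identity $1-d(z,w)^2=\frac{(1-|z|^2)(1-|w|^2)}{|1-\langle z,w\rangle|^2}$ from the preliminaries gives $|\langle k_{z_j},k_{z_k}\rangle|=(1-d(z_j,z_k)^2)^K$; since consecutive iterates lie at a fixed pseudo-hyperbolic distance while $d(z_j,z_k)\to1$ as $|j-k|\to\infty$, the Gram matrix of $\{k_{z_k}\}$ is a fixed matrix with rapidly decaying off-diagonal entries. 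Consequently the diagonal phase twist perturbs this Gram matrix in a controlled, $\theta$-uniform way, so that $\|g\|$ and $\|(C_{\psi,\varphi}^*-\bar\lambda)g\|$ change only by factors approaching $1$ as the block length grows, and the twisted vectors remain approximate null vectors for $C_{\psi,\varphi}^*-\overline{e^{i\theta}\lambda}$. This yields circular symmetry of $\sigma_{ap}(C_{\psi,\varphi}^*)$, hence of $\partial\sigma(C_{\psi,\varphi})$; invoking Corollary 3.6, which already confines $\sigma(C_{\psi,\varphi})$ to an annulus, then promotes this to circular symmetry of the full spectrum. The step demanding the most care is precisely this quantitative norm estimate for the phase-twisted sums, i.e. controlling the non-orthogonality of the kernels uniformly in $\theta$.
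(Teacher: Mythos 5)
Your opening move --- dismissing the exact-similarity route because ``these gauge unitaries do not commute with $M_\psi$'' --- is where you go wrong, and it matters because the paper's proof \emph{is} the exact-similarity route. The conjugating operator does not need to be a Mellin/Paley--Wiener unitary: it can be a multiplication operator. By the proof of Theorem 7.21 in \cite{CM}, for a hyperbolic disc automorphism one can find $f\in H^\infty(\mathbb{D})$ bounded away from zero with $f\circ\varphi^{[1]}=e^{i\theta}f$; setting $F(z)=f(z^{[1]})$ lifts this to $\mathbb{B}_N$ with $F\circ\varphi=e^{i\theta}F$ and $F,1/F\in H^\infty(\mathbb{B}_N)$. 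Since $M_F$ commutes with $M_\psi$, one gets $M_F^{-1}C_{\psi,\varphi}M_F=M_\psi M_F^{-1}(F\circ\varphi)\,C_\varphi=e^{i\theta}C_{\psi,\varphi}$ exactly, for \emph{every} weight $\psi$, and the lemma follows from similarity plus $\sigma(e^{i\theta}T)=e^{i\theta}\sigma(T)$. So the obstruction you cite is not there.

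Your substitute argument has a genuine gap at its first substantive step: given $\lambda\in\partial\sigma(C_{\psi,\varphi})$, the inclusion $\partial\sigma(C_{\psi,\varphi}^*)\subseteq\sigma_{ap}(C_{\psi,\varphi}^*)$ hands you approximate eigenvectors with no structure whatsoever, and you never justify that they can be taken of the form $\sum_k a_kK_{z_k}$ along a single orbit. If you instead try to \emph{build} them by solving the shift recursion on a long block and tapering, the construction only succeeds when $|\lambda|$ equals one of the two asymptotic weight moduli $|\psi(e_1)|\rho^{K}$, $|\psi(-e_1)|\rho^{-K}$ (where $|a_k|$ stays bounded above and below along the block, so $\|g\|^2\gtrsim m$ while the endpoint residual is $O(1)$), or lies strictly between them in the order $|\psi(e_1)|\rho^{K}<|\psi(-e_1)|\rho^{-K}$, where a genuine $\ell^1$ eigenvector exists (this is Lemma 3.10). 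In the opposite order --- precisely the case the paper singles out in Lemma 3.9 --- the recursion forces $|a_k|$ to grow geometrically toward one end of any block for every radius strictly between the two limits, the norm of $g$ concentrates at that endpoint, and the relative residual does not tend to zero; indeed in that regime the interior annulus consists of eigenvalues of $C_{\psi,\varphi}$ itself, not approximate eigenvalues of $C_{\psi,\varphi}^*$ realized on the orbit. So the phase twist $a_k\mapsto e^{ik\theta}a_k$ (which is algebraically correct, and whose effect on the non-orthogonal Gram matrix could indeed be controlled by a Riesz-basis/Schur-test argument using $|\langle k_{z_j},k_{z_k}\rangle|=(1-d(z_j,z_k)^2)^K$) cannot be applied to an arbitrary boundary point of the spectrum, and circular symmetry of $\sigma(C_{\psi,\varphi})$ does not follow from what you have.
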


\begin{proof}
By the proof of Theorem 7.21 in \cite{CM}, for any $\theta\in\mathbb{R}$ we can found $f\in H^\infty(D)$ bounded away from zero such that
$$f\comp\varphi^{[1]}(z^{[1]},0')=e^{i\theta}f(z^{[1]})$$
for all $z^{[1]}\in D$.

Now let
$$F(z^{[1]},z')=f(z^{[1]}),$$
then $F\in H^\infty(\mathbb{B}_N)$ and $F$ is also bounded away from zero. Moreover, we have that
\begin{align*}
F\comp\varphi(z^{[1]},z')&=f\comp\varphi^{[1]}(z^{[1]},z')\\&=f\comp\varphi^{[1]}(z^{[1]},0') \\&=e^{i\theta}f(z^{[1]})\\&=e^{i\theta}F(z^{[1]},z').
\end{align*}
So for any $g\in H^2(\mathbb{B}_N)$,
\begin{align*}
M_F^{-1}C_{\psi,\varphi}M_F(g)&=M_F^{-1}M_\psi C_\varphi M_F(g)\\&=M_\psi M_F^{-1}((F\comp\varphi)\cdot(g\comp\varphi))\\&=e^{i\theta}M_\psi(g\comp\varphi)\\&=e^{i\theta}C_{\psi,\varphi}(g).
\end{align*}
Therefore $C_{\psi,\varphi}$ is similar to $e^{i\theta}C_{\psi,\varphi}$. Thus by the spectral mapping theorem and the fact that similar operators have the same spectrum, we come to the conclusion.
\end{proof}

\begin{corollary}
Suppose $\varphi\in Aut(\mathbb{B}_N)$ where
$$\varphi^{[1]}(z)=\frac{z^{[1]}+s}{1+sz^{[1]}}\quad,\quad 0<s<1$$
and $\psi\in A(\mathbb{B}_N)$ is bounded away from zero. Let $\beta(n)=(n+1)^{1-\gamma}$ with $\gamma\geqslant1$. If
$$|\psi(-e_1)|\left(\frac{1-s}{1+s}\right)^K=|\psi(e_1)|\left(\frac{1+s}{1-s}\right)^K,$$
where $K=(N-1+\gamma)/2$, then the spectrum of $C_{\psi,\varphi}$ on $H^2(\beta,\mathbb{B}_N)$ is the circle
$$\left\{\lambda : |\lambda|=|\psi(-e_1)|\left(\frac{1-s}{1+s}\right)^K=|\psi(e_1)|\left(\frac{1+s}{1-s}\right)^K\right\}.$$
\end{corollary}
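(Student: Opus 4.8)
The plan is to sandwich the spectrum between two copies of one and the same circle: the outer containment comes straight from the annulus estimate already established, and the reverse containment is filled in by the rotational invariance.

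First I would unwind the hypothesis. Write $\rho=\frac{1+s}{1-s}>1$, so that the hypothesis reads $|\psi(e_1)|\rho^{K}=|\psi(-e_1)|\rho^{-K}$; call this common value $R$. Then in the annulus of Corollary 3.6 the inner radius $\min\{|\psi(e_1)|\rho^{K},|\psi(-e_1)|\rho^{-K}\}$ and the outer radius $\max\{|\psi(e_1)|\rho^{K},|\psi(-e_1)|\rho^{-K}\}$ both collapse to $R$. Hence the annulus degenerates to the single circle $\{\lambda:|\lambda|=R\}$, and Corollary 3.6 gives the inclusion $\sigma(C_{\psi,\varphi})\subseteq\{\lambda:|\lambda|=R\}$. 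Observe also that $R>0$: since $\psi$ is bounded away from zero and continuous up to $S_N$, we have $|\psi(e_1)|>0$, so $R=|\psi(e_1)|\rho^{K}>0$.

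For the reverse inclusion I would invoke the rotational symmetry. The operator $C_{\psi,\varphi}$ is bounded on $H^2(\beta,\mathbb{B}_N)$, so its spectrum is a nonempty compact subset of $\mathbb{C}$; pick any $\lambda_0\in\sigma(C_{\psi,\varphi})$. By the inclusion just proved, $|\lambda_0|=R>0$. By Lemma 3.7, $e^{i\theta}\lambda_0\in\sigma(C_{\psi,\varphi})$ for every $\theta\in\mathbb{R}$; as $\theta$ ranges over $[0,2\pi)$ the points $e^{i\theta}\lambda_0$ sweep out the entire circle $\{\lambda:|\lambda|=R\}$, precisely because $|\lambda_0|=R$ is nonzero. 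Hence $\{\lambda:|\lambda|=R\}\subseteq\sigma(C_{\psi,\varphi})$, and combining the two inclusions yields equality, which is the asserted circle.

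There is no serious obstacle here; the result is a clean assembly of Corollary 3.6 and Lemma 3.7. The only points deserving care are that the annulus genuinely degenerates (a one-line algebraic consequence of the hypothesis), that the spectrum is nonempty so that some $\lambda_0$ exists to rotate, and that $R\neq 0$ so that the rotational orbit of $\lambda_0$ is the full circle rather than a single point — the last of these being guaranteed by $\psi$ being bounded away from zero.
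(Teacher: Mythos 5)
Your proposal is correct and follows exactly the paper's own argument: the paper likewise deduces the result "directly from Corollary 3.6 and Lemma 3.7" together with the nonemptiness of the spectrum. You have merely written out the details (the degeneration of the annulus and the positivity of the common radius) that the paper leaves implicit.
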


\begin{proof}
Since the spectrum of $C_{\psi,\varphi}$ can not be empty, the conclusion follows directly from Corollary 3.6 and Lemma 3.7.
\end{proof}

\begin{lemma}
Suppose $\varphi\in Aut(\mathbb{B}_N)$ where
$$\varphi^{[1]}(z)=\frac{z^{[1]}+s}{1+sz^{[1]}}\quad,\quad 0<s<1$$
and $\psi\in A(\mathbb{B}_N)$ is bounded away from zero. Let $\beta(n)=(n+1)^{1-\gamma}$ with $\gamma\geqslant1$. If
$$|\psi(-e_1)|\left(\frac{1-s}{1+s}\right)^K\leqslant|\psi(e_1)|\left(\frac{1+s}{1-s}\right)^K$$
where $K=(N-1+\gamma)/2$, then the spectrum of $C_{\psi,\varphi}$ on $H^2(\beta,\mathbb{B}_N)$ is
$$\left\{\lambda : |\psi(-e_1)|\left(\frac{1-s}{1+s}\right)^K\leqslant|\lambda| \leqslant |\psi(e_1)|\left(\frac{1+s}{1-s}\right)^K\right\}$$

Moreover, each interior point of the annulus belongs to the point spectrum of  $C_{\psi,\varphi}$.
\end{lemma}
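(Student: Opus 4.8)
The inclusion of $\sigma(C_{\psi,\varphi})$ in the closed annulus is already available. Under the standing hypothesis $|\psi(-e_1)|\rho^{-K}\leq|\psi(e_1)|\rho^{K}$, with $\rho=\frac{1+s}{1-s}$, the two quantities appearing in Corollary 3.6 satisfy $\max=|\psi(e_1)|\rho^{K}$ and $\min=|\psi(-e_1)|\rho^{-K}$, so that corollary gives $\sigma(C_{\psi,\varphi})\subseteq\{\lambda:|\psi(-e_1)|\rho^{-K}\leq|\lambda|\leq|\psi(e_1)|\rho^{K}\}$. Since the spectrum is closed and the closed annulus is the closure of the open one, the whole statement, including the ``moreover'', will follow as soon as I show that every $\lambda$ in the \emph{open} annulus lies in the point spectrum: that forces equality of $\sigma(C_{\psi,\varphi})$ with the closed annulus.

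The plan for the interior is to solve $\psi\cdot(f\circ\varphi)=\lambda f$ explicitly by separating a ``power'' factor from a ``weight'' factor. For the power, put $E_c(z)=\bigl(\tfrac{1+z^{[1]}}{1-z^{[1]}}\bigr)^{c}$; since $z^{[1]}\in\mathbb{D}$ the bracket lies in the right half-plane, so the principal branch makes $E_c$ a single-valued holomorphic function on $\mathbb{B}_N$ for every $c\in\mathbb{C}$, and the identities $1\pm\varphi^{[1]}(z)=\frac{(1\pm s)(1\pm z^{[1]})}{1+sz^{[1]}}$ give at once $E_c\circ\varphi=\rho^{c}E_c$, i.e. $C_\varphi E_c=\rho^{c}E_c$. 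For the weight, put $G(z)=\prod_{j=0}^{\infty}\frac{\psi(\varphi_j(z))}{\psi(e_1)}$; a telescoping of the iterates yields $\psi\cdot(G\circ\varphi)=\psi(e_1)\,G$. Consequently $f=G\,E_c$ obeys $C_{\psi,\varphi}f=\psi(e_1)\rho^{c}f$, so $\lambda=\psi(e_1)\rho^{c}$ is an eigenvalue provided $f\in H^2(\beta,\mathbb{B}_N)$ and $f\not\equiv0$; the latter is clear because $\psi$ is bounded away from zero (so the convergent product $G$ is nonvanishing) and $E_c$ never vanishes.

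The point of the splitting is that the two factors carry exactly the fixed-point exponents forced by the local analysis near $e_1$ and $-e_1$. Near $e_1$ the factor $G$ stays regular while $E_c\sim(1-z^{[1]})^{-c}$; near $-e_1$ the fixedness $\varphi_j(-e_1)=-e_1$ makes $G$ blow up like $(1+z^{[1]})^{m}$ with $m=\log_\rho\!\bigl(\psi(e_1)/\psi(-e_1)\bigr)$, while $E_c\sim(1+z^{[1]})^{c}$. Using that $(1\mp z^{[1]})^{\beta}$ is square-integrable against the relevant measure near $\pm e_1$ precisely when $\operatorname{Re}\beta>-K$, membership $f\in H^2(\beta,\mathbb{B}_N)$ amounts to $\operatorname{Re}c<K$ together with $\operatorname{Re}(c+m)>-K$. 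Since $|\lambda|=|\psi(e_1)|\rho^{\operatorname{Re}c}$ and $\rho^{\operatorname{Re}m}=|\psi(e_1)|/|\psi(-e_1)|$, these two conditions translate verbatim into $|\psi(-e_1)|\rho^{-K}<|\lambda|<|\psi(e_1)|\rho^{K}$. Thus letting $\operatorname{Re}c$ sweep the corresponding interval and $\operatorname{Im}c$ range over $\mathbb{R}$ (or, more economically, invoking the rotational symmetry of Lemma 3.7 for the argument) realizes every interior $\lambda$ as an eigenvalue, which is what is needed.

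The genuinely technical step, and the main obstacle, is the rigorous control of $G$. I must check that $\prod_{j\geq0}\psi(\varphi_j(z))/\psi(e_1)$ converges locally uniformly on $\mathbb{B}_N$, so that $G$ is holomorphic, and then estimate it up to $S_N\setminus\{-e_1\}$ so that the boundary asymptotic $G\sim(1+z^{[1]})^{m}$ at $-e_1$ and the two-sided integrability bounds above are justified rather than merely heuristic. Convergence rests on the geometric approach $\varphi_j(z)\to e_1$, since the normal form gives $|1-\varphi_j^{[1]}(z)|\asymp\rho^{-j}$, whence $\sum_j|\psi(\varphi_j(z))-\psi(e_1)|$ is dominated by $\sum_j\omega_\psi(C\rho^{-j})$ with $\omega_\psi$ the modulus of continuity of $\psi$. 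Converting this into absolute convergence of the product and into sharp integrability of $GE_c$ near both fixed points is where the regularity of $\psi$ at $e_1$ and $-e_1$ (beyond bare continuity, e.g. a Dini or Lipschitz condition) is actually used, and it is the one part of the argument that demands care.
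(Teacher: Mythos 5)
Your reduction to the open annulus and the numerology of the exponents are correct, and the algebra of your construction does work: $E_c\circ\varphi=\rho^cE_c$ and $\psi\cdot(G\circ\varphi)=\psi(e_1)G$ would indeed give $C_{\psi,\varphi}(GE_c)=\psi(e_1)\rho^c\,GE_c$. But the step you flag as ``the one part that demands care'' is not merely delicate --- it is an actual obstruction under the hypotheses of the lemma. The lemma assumes only $\psi\in A(\mathbb{B}_N)$, bounded away from zero, and for such $\psi$ the infinite product $G(z)=\prod_{j\geqslant0}\psi(\varphi_j(z))/\psi(e_1)$ can genuinely diverge. Since $\varphi_j(z)\to e_1$ only geometrically, convergence of the product requires $\sum_j\omega_\psi(C\rho^{-j})<\infty$, i.e.\ a Dini condition on $\psi$ at $e_1$. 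Already for $N=1$ take $\psi=\psi(e_1)\exp(u)$ with $u(w)=1/\log(2/(1-w))$, which lies in the disc algebra and vanishes at $1$; then $u(\varphi_j(0))\asymp 1/j$, so $\prod_j|\psi(\varphi_j(0))/\psi(e_1)|=\exp(\sum_j u(\varphi_j(0)))$ diverges and $G$ does not exist. The same regularity issue recurs, independently, in your boundary asymptotics $G\sim(1+z^{[1]})^m$ near $-e_1$, which you need as a two-sided quantitative bound to verify square-integrability of $GE_c$. So your argument proves the lemma only under an added smoothness hypothesis on $\psi$ at the two fixed points, not as stated.

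The paper's proof is built precisely to avoid this. Instead of a multiplicative eigenfunction, it forms the bilateral orbit series $h=\sum_{k\in\mathbb{Z}}\lambda^{-k}\psi_{(k)}\cdot(g\circ\varphi_k)\cdot(F\circ\varphi_k)$ with $g(z)=(1-z_1^2)^p$, which formally satisfies $(C_{\psi,\varphi}-\lambda I)h=0$ by an index shift. Norm convergence of the terms needs only the limit $\lim_n\|\psi_{(n)}\|_\infty^{1/n}=\max\{|\psi(e_1)|,|\psi(-e_1)|\}$ (valid for merely continuous $\psi$) together with the change-of-variables identity of Lemma 3.3, the strict interiority of $\lambda$ supplying a geometric margin via $R_1,R_2$ and the exponent $p$; the interpolating function $F$ on the orbit $\{\varphi_k(0)\}$ serves only to force $h(0)>0$, hence $h\neq0$. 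If you want to keep your explicit-eigenfunction route, you must either add a Dini/Lipschitz hypothesis on $\psi$ at $\pm e_1$, or replace the pointwise product $G$ by a device (like the paper's series, or an interpolation argument) that consumes only the Ces\`aro-type information $\|\psi_{(n)}\|_\infty^{1/n}\to\max\{|\psi(\pm e_1)|\}$ rather than summability of $\psi\circ\varphi_j-\psi(e_1)$ along orbits.
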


\begin{proof}
Suppose $\lambda\in\mathbb{C}$ satisfies
$$|\psi(-e_1)|\left(\frac{1-s}{1+s}\right)^K<\lambda<|\psi(e_1)|\left(\frac{1+s}{1-s}\right)^K,$$
then we can take $R_1$ and $R_2$ such that
$$|\psi(-e_1)|\left(\frac{1-s}{1+s}\right)^K<R_1<\lambda<R_2<|\psi(e_1)|\left(\frac{1+s}{1-s}\right)^K.$$
Since $\frac{1-s}{1+s}<1$, we can find $p>0$ such that
$$|\psi(e_1)|\left(\frac{1+s}{1-s}\right)^{-p}<R_1$$
and
$$|\psi(-e_1)|\left(\frac{1-s}{1+s}\right)^{-p}<R_2.$$
Let $g(z)=(1-z_1^2)^p$.

Now let $z_k=\varphi_k(0)$ for $k\in\mathbb{Z}$. Since
$$\varphi^{[1]}(z)=\frac{z^{[1]}+s}{1+sz^{[1]}},$$
by Theorem 2.6.5 in \cite{CM}, $\{z_k^{[1]}\}_{k=-\infty}^{+\infty}$ is a interpolating sequence for $H^\infty(D)$. So we can find $f\in H^\infty(D)$ such that $|f(z_k^{[1]})|=1$ and
$$\lambda^{-k}\psi_{(k)}(0)\cdot g(z_k)f(z_k^{[1]})>0$$
for all $k\in\mathbb{Z}$. Let
$$F(z)=f(z^{[1]})=0\quad,\quad z\in \mathbb{B}_N,$$
then $F\in H^\infty(\mathbb{B}_N)$. Also $|F(z_k)|=1$ and
$$\lambda^{-k}\psi_{(k)}(0)\cdot g(z_k)F(z_k)>0.$$

Let
\begin{align*}
h&=\sum_{k=-\infty}^{+\infty}\lambda^{-k}\psi_{(k)}\cdot g\comp\varphi_k\cdot F\comp\varphi_k
\\&=gF+\sum_{k=0}^{+\infty}(h^+_k+h^-_k),
\end{align*}
where
$$h^+_k=\lambda^{-k}\left(\prod_{j=0}^{k-1}\psi\comp\varphi_j\right)\cdot g\comp\varphi_k\cdot F\comp\varphi_k$$
and
$$h^-_k=\lambda^k\left(\prod_{j=1}^k\frac{1}{\psi\comp\varphi_{-j}}\right)\cdot g\comp\varphi_{-k}\cdot F\comp\varphi_{-k}.$$
Then we have
$$h(0)=\sum_{k=-\infty}^{+\infty}\lambda^{-k}\psi_{(k)}(0)\cdot g(z_k)F(z_k)>0,$$
so $h\ne 0$. Also it is easy to check that
$$(C_{\psi,\varphi}-\lambda I)h=0.$$
Thus we have $\lambda$ belongs to the point spectrum of  $C_{\psi,\varphi}$ if we can show $h$ converges in  $H^2(\beta,\mathbb{B}_N)$.

For this end, what we should do is almost the same as what have been done in the proof of Theorem 4.9 in \cite{Hyv}. More precisely, by redoing the last part of Theorem 4.9 in \cite{Hyv} on $H^2(\mathbb{B}_N)$ and $A_\alpha(\mathbb{B}_N)$, we can get directly that $\sum_{k=0}^{\infty}||h_k^+||<\infty$. The key point is the usage if Lemma 3.3. On the other hand, by considering $\varphi^{-1}$ and $\frac{1}{\psi\comp\varphi{-1}}$ instead of $\varphi$ and $\psi$, we can also get $\sum_{k=0}^{\infty}||h_k^-||<\infty$. Thus $h$ is well defined. Considering the length of our proof, we omit the details here.
\end{proof}

\begin{lemma}
Suppose $\varphi\in Aut(\mathbb{B}_N)$ where
$$\varphi^{[1]}(z)=\frac{z^{[1]}+s}{1+sz^{[1]}}\quad,\quad 0<s<1$$
and $\psi\in A(\mathbb{B}_N)$ is bounded away from zero. Let $\beta(n)=(n+1)^{1-\gamma}$ with $\gamma\geqslant1$. If
$$|\psi(e_1)|\left(\frac{1+s}{1-s}\right)^K\leqslant|\psi(-e_1)|\left(\frac{1-s}{1+s}\right)^K,$$
where $K=(N-1+\gamma)/2$, then the spectrum of $C_{\psi,\varphi}$ on $H^2(\beta,\mathbb{B}_N)$ is
$$\left\{\lambda : |\psi(e_1)|\left(\frac{1+s}{1-s}\right)^K\leqslant|\lambda| \leqslant|\psi(-e_1)|\left(\frac{1-s}{1+s}\right)^K\right\}.$$

Moreover, each interior point of the annulus belongs to the point spectrum of  $C_{\psi,\varphi}^*$.
\end{lemma}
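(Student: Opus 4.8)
The plan is to prove the two inclusions separately and then close up the annulus, the novelty relative to Lemma 3.9 being that here one must pass to the adjoint. Abbreviate $A=|\psi(e_1)|\left(\frac{1+s}{1-s}\right)^{K}$ and $B=|\psi(-e_1)|\left(\frac{1-s}{1+s}\right)^{K}$, so that the hypothesis reads $A\leqslant B$ and the asserted spectrum is the closed annulus $\{A\leqslant|\lambda|\leqslant B\}$. The inclusion $\sigma(C_{\psi,\varphi})\subseteq\{A\leqslant|\lambda|\leqslant B\}$ is immediate from Corollary 3.6, because when $A\leqslant B$ the outer and inner radii appearing there are exactly $B$ and $A$. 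Thus only the reverse inclusion needs work, and this is precisely where the situation differs from Lemma 3.9: under the present inequality the eigenfunction $h$ manufactured there for $C_{\psi,\varphi}$ itself would fail to converge, so instead I would build eigenvectors for the adjoint $C_{\psi,\varphi}^{*}$.

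Fix $\mu$ with $A<|\mu|<B$ and take the iteration sequence $z_{k}=\varphi_{k}(0)$, $k\in\mathbb{Z}$. I would look for an eigenvector of the shape $h=\sum_{k=-\infty}^{+\infty}c_{k}K_{z_{k}}$. By Proposition 2.5 we have $C_{\psi,\varphi}^{*}K_{z_{k}}=\overline{\psi(z_{k})}K_{z_{k+1}}$, so the demand $C_{\psi,\varphi}^{*}h=\mu h$ forces the recursion $c_{k}=\mu^{-1}\overline{\psi(z_{k-1})}\,c_{k-1}$, that is $c_{k}=\mu^{-k}\overline{\psi_{(k)}(0)}\,c_{0}$ with $\psi_{(k)}$ as in Section 2.3. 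Once convergence is secured, $h\neq0$ and $C_{\psi,\varphi}^{*}h=\mu h$, so $\mu\in\sigma_{p}(C_{\psi,\varphi}^{*})$; this is exactly the ``moreover'' assertion. Since $\sigma_{p}(C_{\psi,\varphi}^{*})\subseteq\sigma(C_{\psi,\varphi}^{*})=\overline{\sigma(C_{\psi,\varphi})}$ and the open annulus $A<|\lambda|<B$ is invariant under conjugation, every interior point lies in $\sigma(C_{\psi,\varphi})$; alternatively one may fill each circle by the circular symmetry of Lemma 3.7. Finally, because the spectrum is closed, the whole closed annulus is contained in $\sigma(C_{\psi,\varphi})$, which together with the first inclusion yields the claimed equality.

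The one genuine obstacle is the convergence of $h=\sum_{k}c_{k}K_{z_{k}}$ in $H^{2}(\beta,\mathbb{B}_{N})$, and this is the analogue of the norm estimates closing Lemma 3.9 (and of Theorem 4.9 in \cite{Hyv}). Two inputs are needed. First, $\|K_{z_{k}}\|=(1-|z_{k}|^{2})^{-K}$ together with the identity $1-|\varphi(z)|^{2}=\frac{(1-s^{2})(1-|z|^{2})}{|1+sz^{[1]}|^{2}}$; iterating the latter and using $z_{k}\to e_{1}$ as $k\to+\infty$ and $z_{k}\to-e_{1}$ as $k\to-\infty$ shows that $1-|z_{k}|^{2}$ decays like $\rho^{-k}$ for $k\to+\infty$ and like $\rho^{k}$ for $k\to-\infty$, where $\rho=\frac{1+s}{1-s}$. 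Second, the continuity of $\psi$ at the two fixed points gives $|\psi_{(k)}(0)|^{1/|k|}\to|\psi(e_{1})|$ or $|\psi(-e_{1})|$ according as $k\to+\infty$ or $k\to-\infty$. Combining the two, the forward tail $\sum_{k\geqslant0}|c_{k}|\,\|K_{z_{k}}\|$ is dominated by a geometric series of ratio $A/|\mu|<1$ and the backward tail $\sum_{k<0}|c_{k}|\,\|K_{z_{k}}\|$ by one of ratio $|\mu|/B<1$, so both converge exactly on the open annulus; and $h\neq0$ follows, as in Lemma 3.9, from the fact that $\{z_{k}\}$ is an interpolating sequence, which keeps the kernels adequately separated. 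Since this computation is virtually identical to the one in \cite{Hyv}, with the reproducing kernels $K_{z_{k}}$ replacing the functions $g\comp\varphi_{k}\cdot F\comp\varphi_{k}$, I would only record these estimates and not reproduce them in detail.
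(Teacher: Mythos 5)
Your proposal is correct and follows essentially the same route as the paper: eigenvectors of $C_{\psi,\varphi}^*$ built as bilateral sums of reproducing kernels along the orbit $z_k=\varphi_k(0)$, convergence from the geometric decay forced by $A/|\mu|<1$ and $|\mu|/B<1$, nonvanishing via the interpolating-sequence argument, and closure of the annulus combined with Corollary 3.6. The only cosmetic difference is that the paper works with the normalized kernels $g_k=K_{z_k}/\|K_{z_k}\|$, which folds the norm ratios $(1-|z_k|^2)^{-K}$ into the coefficients $u_k$ and makes the summability estimates immediate.
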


\begin{proof}
Let $z_k=\varphi_k(0)$ for $k\in\mathbb{Z}$, then
$$\lim_{k\to +\infty}z_k=e_1\quad,\quad\lim_{k\to -\infty}z_k=-e_1$$

Let $g_k=\frac{K_{z_k}}{||K_{z_k}||}$ where $K_{z_k}$ denote the kernel for evaluation at $z_k$ in $H^2(\beta,\mathbb{B}_N)$. Then by Proposition 2.5,
$$C_{\psi,\varphi}^*g_k=u_kg_{k+1},$$
where
$$u_k=\overline{\psi(z_k)}\frac{||K_{z_{k+1}}||}{||K_{z_k}||}= \overline{\psi(z_k)}\left(\frac{1-|z_k|^2}{1-|z_{k+1}|^2}\right)^K.$$

A simple calculation shows that
$$\frac{1-|z_{k+1}|^2}{1-|z_k|^2}=\left|\frac{\partial\varphi^{[1]}}{\partial z^{[1]}}(z_k)\right|,$$
so we have
\begin{align*}
\lim_{k\to +\infty}|u_k|&=|\psi(e_1)|\cdot\lim_{k\to +\infty}\left|\frac{\partial\varphi^{[1]}}{\partial z^{[1]}}(z_k)\right|^{-K}
\\&=|\psi(e_1)|\left|\frac{\partial\varphi^{[1]}}{\partial z^{[1]}}(z_k)\right|^{-K}
\\&=|\psi(e_1)|\left(\frac{1+s}{1-s}\right)^K.
\end{align*}
Also we have
$$\lim_{k\to -\infty}|u_k|=|\psi(-e_1)|\left(\frac{1-s}{1+s}\right)^K.$$

Now suppose $\lambda\in\mathbb{C}$ satisfies
$$|\psi(e_1)|\left(\frac{1+s}{1-s}\right)^K<\lambda<|\psi(-e_1)|\left(\frac{1-s}{1+s}\right)^K$$
and let
$$h=g_0+\sum_{j=1}^{+\infty}\left(a_jg_j+b_jg_{-j}\right),$$
where $$a_j=\lambda^{-j}\prod_{k=0}^{j-1}u_k\quad,\quad b_j=\lambda^j\prod_{k=-j}^{-1}\frac{1}{u_k}.$$

Since $\lim_{k\to \infty}\left|\frac{u_k}{\lambda}\right|<1$, we can find $q\in(0,1)$ and $n_0\in\mathbb{N}^*$ such that
$$|a_n|=|\lambda^{-n}\prod_{k=0}^{n-1}u_k|<q^n$$
when $n>n_0$. Thus we have $\sum_{j=1}^{+\infty}|a_j|<\infty$. Also by $\lim_{k\to \infty}\left|\frac{\lambda}{u_k}\right|<1$, we have $\sum_{j=1}^{+\infty}|b_j|<\infty$. So $h$ is well defined in $H^2(\beta,\mathbb{B}_N)$. It is easy to check that
$$(C_{\psi,\varphi}^*-\lambda I)h=0,$$
therefore we have $\lambda$ belongs to the point spectrum of  $C_{\psi,\varphi}^*$ if we can show $h\ne 0$.

Again by Theorem 2.6.5 in \cite{CM}, $\{z_k^{[1]}\}_{k=-\infty}^{+\infty}$ is a interpolating sequence for $H^\infty(D)$. So we can find $f\in H^\infty(D)$ such that $f(0)=1$ and $f(z_k^{[1]})=0$ when $k\ne 0$. Let
$$F(z)=f(z^{[1]})\quad,\quad z\in \mathbb{B}_N.$$
Then $F\in H^\infty(\mathbb{B}_N)\subset H^2(\beta,\mathbb{B}_N)$. Also $F(0)=1$ and $F(z_k)=0$ when $k\ne 0$. So
$$\langle F,h_\lambda\rangle=\langle F,g_0\rangle=\frac{F(0)}{||K_{z_0}||}=1.$$
Thus we have $h\ne 0$. So $\lambda$ belongs to the point spectrum of  $C_{\psi,\varphi}^*$. Along with Corollary 3.6, we get our conclusion.
\end{proof}

Now we are ready to generalize our result to the cases when the fixed points of $\varphi$ are not necessarily $\pm e_1$.

\begin{theorem}
Suppose $\varphi\in Aut(\mathbb{B}_N)$, with no fixed point in $\mathbb{B}_N$, has two distinct fixed points $a\ne b$ on $S_N$ and $a$ is the Denjoy-Wolff point of $\varphi$. Let $\psi\in A(\mathbb{B}_N)$ be bounded away from zero. Then the spectrum of $C_{\psi,\varphi}$ on $H^2(\beta,\mathbb{B}_N)$, for $\beta(n)=(n+1)^{1-\gamma}$ with $\gamma\geqslant1$, is
$$\left\{\lambda :\min\{|\psi(a)|\rho^K,|\psi(b)|\rho^{-K}\} \leqslant\lambda\leqslant\max\{|\psi(a)|\rho^K,|\psi(b)|\rho^{-K}\}\right\},$$
where $K=(N-1+\gamma)/2$ and $\rho$ is the spectral radius of the matrix $\varphi'(a)^{-1}$ ( also the spectral radius of $\varphi'(b)$ ).

Moreover, each interior point of the annulus belongs to the point spectrum of either $C_{\psi,\varphi}$ or $C_{\psi,\varphi}^*$.
\end{theorem}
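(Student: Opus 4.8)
The plan is to reduce the general two-fixed-point case to the already-solved normalized case of Lemmas 3.9 and 3.10 by conjugating $C_{\psi,\varphi}$ with a composition operator. First I would invoke Lemma 3.4 to produce $\phi\in Aut(\mathbb{B}_N)$ with $\phi(a)=e_1$ and $\phi(b)=-e_1$ such that $\widehat{\varphi}=\phi\comp\varphi\comp\phi^{-1}$ has the normalized form, so in particular $\widehat{\varphi}^{[1]}(z)=(z^{[1]}+s)/(1+sz^{[1]})$ for some $s\in(0,1)$. The same lemma identifies $\rho=\frac{1+s}{1-s}$ as the common spectral radius of $\varphi'(a)^{-1}$ and $\varphi'(b)$, which is exactly the $\rho$ appearing in the statement.

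Next I would compute the conjugate explicitly. Since $\phi$ is an automorphism, $C_\phi$ is bounded and invertible on $H^2(\beta,\mathbb{B}_N)$ with $C_\phi^{-1}=C_{\phi^{-1}}$. A direct computation, entirely parallel to the one in Corollary 3.6, gives $C_\phi^{-1}C_{\psi,\varphi}C_\phi=C_{\widehat{\psi},\widehat{\varphi}}$, where $\widehat{\psi}=\psi\comp\phi^{-1}$. Because $\phi^{-1}$ extends holomorphically past $S_N$ and $\psi\in A(\mathbb{B}_N)$ is bounded away from zero, $\widehat{\psi}\in A(\mathbb{B}_N)$ is again bounded away from zero, and the normalization of $\phi$ forces the crucial boundary values $\widehat{\psi}(e_1)=\psi(a)$ and $\widehat{\psi}(-e_1)=\psi(b)$. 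Thus $C_{\psi,\varphi}$ is similar to a weighted composition operator $C_{\widehat{\psi},\widehat{\varphi}}$ of exactly the normalized type treated earlier, with the same spectrum.

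Then I would apply Lemma 3.9 or Lemma 3.10 to $C_{\widehat{\psi},\widehat{\varphi}}$. Exactly one of the two comparison hypotheses $|\widehat{\psi}(-e_1)|\rho^{-K}\leqslant|\widehat{\psi}(e_1)|\rho^{K}$ or its reverse holds (both holding in the borderline equal case), and in either case the conclusion describes $\sigma(C_{\widehat{\psi},\widehat{\varphi}})$ as the annulus with inner and outer radii $\min$ and $\max$ of $|\psi(a)|\rho^{K}$ and $|\psi(b)|\rho^{-K}$. Since similar operators share their spectrum, this is precisely $\sigma(C_{\psi,\varphi})$, establishing the main assertion. For the point-spectrum alternative, if Lemma 3.9 applies then its interior points lie in the point spectrum of $C_{\widehat{\psi},\widehat{\varphi}}$, and similarity preserves the point spectrum, so they lie in $\sigma_p(C_{\psi,\varphi})$; if instead Lemma 3.10 applies, the interior points lie in $\sigma_p(C_{\widehat{\psi},\widehat{\varphi}}^*)$, and from $C_{\widehat{\psi},\widehat{\varphi}}^*=C_\phi^{*}C_{\psi,\varphi}^{*}(C_\phi^{*})^{-1}$ one sees $C_{\widehat{\psi},\widehat{\varphi}}^*$ is similar to $C_{\psi,\varphi}^*$, so these points lie in $\sigma_p(C_{\psi,\varphi}^*)$. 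This produces the stated ``either $C_{\psi,\varphi}$ or $C_{\psi,\varphi}^*$'' dichotomy.

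The argument is essentially all bookkeeping once the conjugation is in place; the only steps needing genuine care are verifying that $\widehat{\psi}=\psi\comp\phi^{-1}$ retains membership in $A(\mathbb{B}_N)$, boundedness away from zero, and the correct boundary evaluations at $\pm e_1$, together with the adjoint-similarity identity that lets the point-spectrum conclusion of Lemma 3.10 transfer to $C_{\psi,\varphi}^*$ rather than $C_{\psi,\varphi}$. I would expect no further obstacle, since every analytic estimate was already absorbed into Lemma 3.4 and Lemmas 3.9 and 3.10.
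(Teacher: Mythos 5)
Your proposal is correct and follows essentially the same route as the paper: conjugate by $C_\phi$ using the normalizing automorphism of Lemma 3.4 to reduce to $C_{\widehat{\psi},\widehat{\varphi}}$ with $\widehat{\psi}=\psi\comp\phi^{-1}$, then quote Corollary 3.8 and Lemmas 3.9--3.10. Your extra care in checking $\widehat{\psi}(\pm e_1)=\psi(a),\psi(b)$ and in transferring the point-spectrum conclusion through the adjoint similarity $C_{\widehat{\psi},\widehat{\varphi}}^*=C_\phi^*C_{\psi,\varphi}^*(C_\phi^*)^{-1}$ is a welcome elaboration of details the paper leaves implicit.
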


\begin{proof}
By lemma 2.4, we can find $\phi\in Aut(\mathbb{B}_N)$ such that
$$\phi\comp\varphi\comp\phi^{-1}(z)=\left(\frac{z^{[1]}+s}{1+sz^{[1]}},U\frac{\sqrt{1-s^2}z'}{1+sz^{[1]}}\right),$$
where $U$ is unitary on $\mathbb{C}^{N-1}$ and $0<s<1$. Also, we have
$$\rho=\frac{1+s}{1-s}.$$

Now let $\widehat{\varphi}=\phi\comp\varphi\comp\phi^{-1}$ and $\widehat{\psi}=\psi\comp\phi^{-1}$. Then for any $f\in H^2(\beta,\mathbb{B}_N)$,
\begin{align*}
C_{\phi}^{-1}C_{\psi,\varphi}C_{\phi}(f)&=C_{\phi}^{-1}C_{\psi,\varphi}(f\comp\phi)
\\&=C_{\phi}^{-1}(\psi\cdot f\comp\phi\comp\varphi)
\\&=\psi\comp\phi^{-1}\cdot f\comp\phi\comp\varphi\comp\phi^{-1}
\\&=\widehat{\psi}\cdot f\comp\widehat{\varphi}=C_{\widehat{\psi},\widehat{\varphi}}.
\end{align*}
So $C_{\psi,\varphi}$ is similar to $C_{\widehat{\psi},\widehat{\varphi}}$, hence
$$\sigma(C_{\psi,\varphi})=\sigma(C_{\widehat{\psi},\widehat{\varphi}}).$$

Since $\widehat{\psi}\in A(\mathbb{B}_N)$ is also bounded away from zero, the results follow directly from Corollary 3.8, Lemma 3.9 and Lemma 3.10.
\end{proof}

Finally, by taking $N=1$, we get the result when $\varphi$ is a hyperbolic automorphism of $D$, which complete the discussion in \cite{Gun2} and \cite{Hyv} on $H^2(D)$ and $A_\alpha^2(D)$, as our corollary below.

\begin{corollary}
Suppose $\varphi$ is a hyperbolic automorphism of $D$ with Denjoy-Wolff point $a$ and the other fixed point $b$. Assume that $\psi\in A(D)$ is bounded away from zero. Then on $H^2(D)$, the spectrum of $C_{\psi,\varphi}$ is
$$\left\{\lambda : \min\{|\psi(a)|\rho^{1/2},|\psi(b)|\rho^{-1/2}\}\leqslant|\lambda|\leqslant \max\{|\psi(a)|\rho^{1/2},|\psi(b)|\rho^{-1/2}\}\right\},$$
and on $A_\alpha^2(D)$, the spectrum of $C_{\psi,\varphi}$ is
$$\left\{\lambda : \min\{|\psi(a)|\rho^{(\alpha+2)/2},|\psi(b)|\rho^{-(\alpha+2)/2}\}\leqslant |\lambda|\leqslant\max\{|\psi(a)|\rho^{(\alpha+2)/2},|\psi(b)|\rho^{-(\alpha+2)/2}\}\right\}.$$
Here $\rho=\varphi'(a)^{-1}=\varphi'(b)$.
\end{corollary}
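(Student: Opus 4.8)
The plan is to read off this statement as the case $N=1$ of Theorem 3.11, so that essentially all the analytic work is already done and only an identification of constants remains. First I would note that a hyperbolic automorphism $\varphi$ of $D$ is precisely an automorphism of $\mathbb{B}_1$ with no fixed point inside the disc and with two distinct fixed points $a\neq b$ on the boundary $S_1$, the attractive one $a$ being the Denjoy-Wolff point. Hence the hypotheses of Theorem 3.11 hold verbatim, and that theorem identifies $\sigma(C_{\psi,\varphi})$ on $H^2(\beta,\mathbb{B}_1)$ as the set of $\lambda$ with $\min\{|\psi(a)|\rho^{K},|\psi(b)|\rho^{-K}\}\leqslant|\lambda|\leqslant\max\{|\psi(a)|\rho^{K},|\psi(b)|\rho^{-K}\}$, where $K=(N-1+\gamma)/2$ and $\rho$ is the spectral radius of $\varphi'(a)^{-1}$. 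Since this min/max form already subsumes the degenerate case in which the two radii coincide (a circle), no separate argument is needed.

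Next I would specialize the exponent. Putting $N=1$ gives $K=\gamma/2$. By Remark 2.1 the value $\gamma=1$ corresponds to the Hardy space $H^2(D)$, whence $K=1/2$ and the radii are $|\psi(a)|\rho^{1/2}$ and $|\psi(b)|\rho^{-1/2}$; the values $\gamma>1$ correspond to the weighted Bergman space $A_{\gamma-2}^2(D)=A_\alpha^2(D)$ with $\alpha=\gamma-2$, whence $K=(\alpha+2)/2$ and the radii become $|\psi(a)|\rho^{(\alpha+2)/2}$ and $|\psi(b)|\rho^{-(\alpha+2)/2}$. These are exactly the two displayed annuli in the statement.

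The remaining point is to pin down $\rho$ in scalar terms. In one variable $\varphi'(a)$ is a number, so the spectral radius of the $1\times 1$ matrix $\varphi'(a)^{-1}$ is simply $\varphi'(a)^{-1}$. Since $a$ is the attractive fixed point one has $0<\varphi'(a)<1$; moreover, conjugating $\varphi$ by the Cayley transform to the upper half plane (as in the discussion preceding Lemma 3.5) turns it into a dilation fixing $0$ and $\infty$, whose multipliers at the two fixed points are mutually reciprocal. Hence $\varphi'(a)\varphi'(b)=1$, so the single number $\rho$ equals both $\varphi'(a)^{-1}$ and $\varphi'(b)$ and exceeds $1$, matching the final line of the statement.

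No genuine obstacle arises here: the spectral description, the point-spectrum assertion, and the normal form of $\varphi$ are all furnished by Theorem 3.11 and Lemma 2.4. The only care needed is the elementary verification that the two boundary multipliers of a hyperbolic M\"obius map are reciprocal and that the Denjoy-Wolff multiplier is less than one, which together justify writing the single quantity $\rho=\varphi'(a)^{-1}=\varphi'(b)$ throughout.
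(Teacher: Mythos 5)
Your proposal is correct and follows exactly the route the paper intends: the paper offers no separate proof of this corollary, simply asserting that it is the $N=1$ specialization of Theorem 3.11, and your computation of $K=\gamma/2$ (giving $1/2$ for $H^2(D)$ and $(\alpha+2)/2$ for $A_\alpha^2(D)$) together with the identification $\rho=\varphi'(a)^{-1}=\varphi'(b)$ via the reciprocal boundary multipliers supplies precisely the omitted bookkeeping.
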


\section{Automorphisms with no fixed point in $\mathbb{B}_N$ and one fixed points on $S_N$}

\subsection{Spectral radius}

Let us also start by estimating the spectral radius in this situation.

\begin{lemma}
Suppose $\varphi\in Aut(\mathbb{B}_N)$ fixes the point $e_1$ only, and $\psi\in A(\mathbb{B}_N)$ is bounded away from zero. Then the spectral radius of $C_{\psi,\varphi}$ on $H^2(\beta,\mathbb{B}_N)$, for $\beta(n)=(n+1)^{1-\gamma}$ with $\gamma\geqslant1$, is no larger than $|\psi(e_1)|$.
\end{lemma}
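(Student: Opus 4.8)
The plan is to imitate the factorization used in the proof of Lemma 3.5, but with a single globally defined holomorphic weight in place of $\partial\varphi^{[1]}/\partial z^{[1]}$, since a parabolic $\varphi$ no longer admits the normal form of Lemma 3.1. As $\varphi$ extends holomorphically to a neighbourhood of $\overline{\mathbb{B}_N}$ and $\det\varphi'$ is zero-free on the simply connected ball, I would fix a holomorphic branch $\chi=(\det\varphi')^{2/(N+1)}$. From the automorphism Jacobian identity $|\det\varphi'(z)|^2=\left((1-|\varphi(z)|^2)/(1-|z|^2)\right)^{N+1}$, which follows from $1-|\varphi(z)|^2=(1-|z|^2)(1-|a|^2)/|1-\langle z,a\rangle|^2$ with $a=\varphi^{-1}(0)$, one gets $|\chi(z)|=(1-|\varphi(z)|^2)/(1-|z|^2)$ on all of $\overline{\mathbb{B}_N}$, so $\chi\in A(\mathbb{B}_N)$ is bounded away from zero. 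Setting $\upsilon=\psi/\chi^K$ and $\chi_n=\prod_{j=0}^{n-1}\chi\comp\varphi_j$, the definition $\psi_{(n)}=\prod_{j=0}^{n-1}\psi\comp\varphi_j$ yields the exact factorization $\psi_{(n)}=\upsilon_{(n)}\,\chi_n^{K}$, while $|\chi_n(z)|=(1-|\varphi_n(z)|^2)/(1-|z|^2)$ by telescoping.

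Next I would show that $C_{\chi_n^K,\varphi_n}$ is an isometry on $H^2(\beta,\mathbb{B}_N)$ for every $n$, exactly as in the norm computations inside Lemma 3.5 but now valid for an arbitrary automorphism. For $\gamma=1$, the same polar-coordinate argument as in Lemma 3.3, carried out with $|\det\varphi_n'|$ in place of the special coordinate derivative, gives $d\sigma(\varphi_n(\zeta))=\left((1-|\varphi_n(\zeta)|^2)/(1-|\zeta|^2)\right)^N d\sigma(\zeta)$, so the substitution $w=\varphi_n(z)$ turns $\|C_{\chi_n^K,\varphi_n}f\|^2$ into $\|f\|^2$. For $\gamma>1$ the same substitution works once one checks the pointwise identity $|\chi_n|^{2K}(1-|z|^2)^{\gamma-2}=(1-|\varphi_n|^2)^{\gamma-2}|\det\varphi_n'|^2$, which is immediate from the formulas above together with $2K=N-1+\gamma$. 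Writing $C_{\psi,\varphi}^n=C_{\psi_{(n)},\varphi_n}=M_{\upsilon_{(n)}}C_{\chi_n^K,\varphi_n}$ by Remark 2.4 and Proposition 2.2, this gives $\|C_{\psi,\varphi}^n\|^{1/n}\leqslant\|\upsilon_{(n)}\|_\infty^{1/n}$.

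It then remains to prove the single fixed point analogue of the boundary estimate preceding Lemma 3.5, namely $\lim_{n\to\infty}\|\upsilon_{(n)}\|_\infty^{1/n}=|\upsilon(e_1)|$. The lower bound is immediate, since $\upsilon_{(n)}(e_1)=\upsilon(e_1)^n$. For the upper bound I would pass to the Siegel upper half space via the Cayley transform $\Phi$: here $\sigma=\Phi\comp\varphi\comp\Phi^{-1}$ is an automorphism of $H_N$ fixing only $\infty=\Phi(e_1)$, that is, a Heisenberg translation up to a rotation, whose iterates push every compact subset of $\partial H_N$ towards $\infty$. Consequently, given $\epsilon>0$ and a neighbourhood $V$ of $e_1$ on which $|\upsilon|\leqslant(1+\epsilon)|\upsilon(e_1)|$, there is a uniform $n_0$ such that for each $z\in S_N$ at most $n_0$ of the iterates $\varphi_j(z)$ lie outside $V$, giving $\|\upsilon_{(n)}\|_\infty\leqslant\|\upsilon\|_\infty^{n_0}\big[(1+\epsilon)|\upsilon(e_1)|\big]^{n-n_0}$ and hence the claimed limit. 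Finally, since $e_1$ is the Denjoy-Wolff point of the parabolic $\varphi$, its angular derivative there equals $1$, so $|\chi(e_1)|=\lim_{z\to e_1}(1-|\varphi(z)|^2)/(1-|z|^2)=1$ and therefore $|\upsilon(e_1)|=|\psi(e_1)|$, whence $r(C_{\psi,\varphi})=\lim_{n\to\infty}\|C_{\psi,\varphi}^n\|^{1/n}\leqslant|\psi(e_1)|$. The main obstacle is the uniformity of $n_0$: one must verify that the Heisenberg translation drives $S_N\setminus V$ into $V$ uniformly, which is precisely where the parabolic normal form on $H_N$ (replacing the obvious contraction used in the hyperbolic case) is essential.
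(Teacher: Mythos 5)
Your proposal is correct, and it differs from the paper's proof in one substantive way. The shared core is the boundary orbit estimate: both you and the paper pass to the Siegel half space, write $\sigma=\Phi\circ\varphi\circ\Phi^{-1}$ as a Heisenberg translation, and deduce that for every $z\in S_N$ at most $n_0$ of the iterates $\varphi_j(z)$ lie outside a prescribed neighbourhood $V$ of $e_1$, whence $\lim_n\|\cdot_{(n)}\|_\infty^{1/n}$ equals the modulus of the weight at $e_1$. You correctly identify the uniformity of $n_0$ as the main obstacle but leave it unproved; the paper closes it with a short trick worth recording: since $\sigma_n(w+ie_1)=\sigma_n(w)+ie_1$, the uniform divergence of $\sigma_n$ on the \emph{interior} compact $\Lambda'=\{w+ie_1:w\in\Phi(S_N\setminus V)\}$ (which is just Denjoy--Wolff) transfers to the boundary compact $\Phi(S_N\setminus V)$ at the cost of $1$ in modulus. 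Where you genuinely diverge is in handling the unweighted part: the paper simply estimates $\|C_{\psi,\varphi}^n\|\leqslant\|\psi_{(n)}\|_\infty\,\|C_\varphi^n\|$ and quotes Proposition 7.11 of Cowen--MacCluer for $r(C_\varphi)=1$, whereas you transplant the isometric factorization of Lemma 3.5, replacing $\partial\varphi^{[1]}/\partial z^{[1]}$ by a branch of $(\det\varphi')^{2/(N+1)}$. Your Jacobian identities are the standard ones and the change-of-variable computations go through for a general automorphism, and the final ingredient $|\chi(e_1)|=\lim_{z\to e_1}(1-|\varphi(z)|^2)/(1-|z|^2)=1$ is exactly the paper's Lemma 4.5 (proved there from Wolff's lemma applied to $\varphi$ and $\varphi^{-1}$). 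Your route is more self-contained and makes the hyperbolic and parabolic cases formally parallel; the paper's is shorter because it borrows the known spectral radius of the unweighted $C_\varphi$. With the uniformity step supplied as above, your argument is complete.
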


\begin{proof}
Let
$$\Phi(z)=i\frac{e_1+z}{1-z^{[1]}}$$
be the Cayley transform from $\mathbb{B}_N$ onto the Siegel upper half space $H_N$. Since $e_1$ is the only fixed point of $\varphi$ in $\overline{\mathbb{B}_N}$, $\sigma=\Phi\comp\varphi\comp\Phi^{-1}$ is a automorphism of $H_N$ that fixes $\infty$ only. Hence by Proposition 2.2.10 in \cite{Aba} and the continuity of $\sigma$ on $\overline{H_N}$, we can find $U$ unitary on $\mathbb{C}^{N-1}$ and $a\in\partial{H_N}$ such that
$$\sigma(w)=(w^{[1]}+a^{[1]}+2i\langle Uw',a'\rangle,Uw'+a')$$
for all $w\in\overline{H_N}$.

By our assumption, $\psi(e_1)\ne 0$. Then for any $\epsilon>0$ we can find a neighborhood $V$ of $e_1$ such that
$$|\psi(z)|\leqslant (1+\epsilon)|\psi(e_1)|$$
for all $z\in S_N\cap V$.

Since $\Phi$, which maps $e_1$ to $\infty$, extends to a homeomorphism of $\overline{\mathbb{B}_N}$ onto $H_N\cup\partial H_N\cup\{\infty\}$, we can take $\Lambda=\Phi(S_N\backslash V)$ and $\Lambda'=\{w+i : w\in\Lambda\}$. Note that $\Lambda$ is a compact set on $\partial H_N$ and $\Lambda'$ is a compact set in $H_N$. Hence we can find $R>0$ such that both $\Lambda$ and $\Lambda'$ are contained in the set $\{w\in\mathbb{C}^N : |w|\leqslant R\}$. Since $e_1$ is the Denjoy-Wolff point of $\varphi$, $\varphi_n$ converges to $e_1$ uniformly on compact subsets of $\mathbb{B}_N$ as $n\to+\infty$. Equivalently, $\sigma_n$ converges to $\infty$ uniformly on compact subsets of $H_N$. Thus we can find a positive integer $n_0$ such that $|\sigma_n(w)|>R+1$ for all $n>n_0$ and $w\in\Lambda'$.

By noticing that $\sigma_n(w+i)=\sigma_n(w)+ie_1$, for any $w$ in $\Lambda$ and $n>n_0$ we have
$$|\sigma_n(w)|\geqslant|\sigma_n(w+i)|-1>R.$$

Now suppose $z\in S_N\backslash V$, then $\Phi(z)\in\Lambda$. So for any $n>n_0$,
$$|\Phi(\varphi_n(z))|=|\sigma_n(\Phi(z))|>R,$$
which means that $\varphi_n(z)$ is no longer in $S_N\backslash V$. So for all $z\in S_N$, at most $n_0$ elements of the set $\{\varphi_j(z) : j=0,1,2,...\}$ are not contained in $S_N\cap V$. Thus for $n>n_0$,
\begin{align*}
||\psi_{(n)}||_{\infty}&=\max_{|z|=1}\left(\prod_{j=0}^{n-1}|\psi\comp\varphi_j(z)|\right) \\&\leqslant||\psi||_{\infty}^{n_0}\left[(1+\epsilon)|\psi(e_1)|\right]^{n-n_0}.
\end{align*}
By the arbitrariness of $\epsilon$,
$$\overline{\lim_{n\to\infty}}||\psi_{(n)}||_{\infty}^{1/n}\leqslant|\psi(e_1)|.$$
However, since $\psi_{(n)}(e_1)=\psi(e_1)^n$, we have
$$||\psi_{(n)}||_{\infty}^{1/n}\geqslant|\psi(e_1)|.$$
Thus we can get
$$\lim_{n\to\infty}||\psi_{(n)}||_{\infty}^{1/n}=|\psi(e_1)|.$$

Proposition 7.11 in \cite{CM} shows that if $\varphi\in Aut(\mathbb{B}_N)$ fixes the point $e_1$ only, then the spectral radius of $C_\psi$ on $H^2(\beta,\mathbb{B}_N)$ is $1$. So
\begin{align*}
r(C_{\psi,\varphi})&=\lim_{n\to\infty}||C_{\psi,\varphi}^n||^{1/n}
\\&\leqslant\lim_{n\to\infty}||\psi_{(n)}||_{\infty}^{1/n}\cdot||C_{\varphi_n}||^{1/n}
\\&=|\psi(e_1)|\cdot r(C_\varphi)=|\psi(e_1)|.
\end{align*}
\end{proof}

\begin{remark}
The proof of Lemma 4.1 shows that if $\varphi\in Aut(\mathbb{B}_N)$ fixes the point $e_1$ only, then $\varphi_n$ converge to $e_1$ uniformly on any compact subset of $\overline{\mathbb{B}_N}\backslash\{e_1\}$ as $n\to+\infty$.
\end{remark}

\begin{corollary}
Suppose $\varphi\in Aut(\mathbb{B}_N)$ fixes the point $e_1$ only, and $\psi\in A(\mathbb{B}_N)$ is bounded away from zero.Then the spectrum of $C_{\psi,\varphi}$ on $H^2(\beta,\mathbb{B}_N)$, for $\beta(n)=(n+1)^{1-\gamma}$ with $\gamma\geqslant1$, is contained in the circle
$$\{\lambda : |\lambda|=|\psi(e_1)|\}.$$
\end{corollary}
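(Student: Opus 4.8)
The plan is to trap $\sigma(C_{\psi,\varphi})$ between two circles of the common radius $|\psi(e_1)|$, in exactly the spirit of Corollary 3.6. Lemma 4.1 already delivers the outer half: since $r(C_{\psi,\varphi}) \leqslant |\psi(e_1)|$, the spectrum lies in the closed disk $\{\lambda : |\lambda| \leqslant |\psi(e_1)|\}$. What remains is the matching lower bound $|\lambda| \geqslant |\psi(e_1)|$ for every $\lambda \in \sigma(C_{\psi,\varphi})$, and I would obtain it by reapplying Lemma 4.1 to the inverse operator.

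First I would pass to $C_{\psi,\varphi}^{-1}$. By Proposition 2.4 the operator is invertible with $C_{\psi,\varphi}^{-1} = C_{1/(\psi\comp\varphi^{-1}),\,\varphi^{-1}}$. The key observation is that $\varphi^{-1}$ is again an automorphism whose only fixed point is $e_1$ (the fixed-point sets of $\varphi$ and $\varphi^{-1}$ coincide, so $e_1$ remains the sole, hence Denjoy-Wolff, fixed point), and that $1/(\psi\comp\varphi^{-1})$ lies in $A(\mathbb{B}_N)$ and is bounded away from zero because $\psi$ is. Thus the hypotheses of Lemma 4.1 hold for this new pair, and applying it yields
$$r\left(C_{\psi,\varphi}^{-1}\right) \leqslant \left|\frac{1}{\psi(\varphi^{-1}(e_1))}\right| = \frac{1}{|\psi(e_1)|},$$
where I have used $\varphi^{-1}(e_1) = e_1$.

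Finally I would invoke the spectral mapping theorem for the reciprocal: since $0 \notin \sigma(C_{\psi,\varphi})$, one has $\sigma(C_{\psi,\varphi}^{-1}) = \{1/\lambda : \lambda \in \sigma(C_{\psi,\varphi})\}$. The bound on $r(C_{\psi,\varphi}^{-1})$ then forces $|1/\lambda| \leqslant 1/|\psi(e_1)|$, i.e. $|\lambda| \geqslant |\psi(e_1)|$, for each $\lambda \in \sigma(C_{\psi,\varphi})$. Combined with the outer estimate from Lemma 4.1 this gives $|\lambda| = |\psi(e_1)|$, which is precisely the asserted containment in the circle. I do not expect a genuine obstacle: the entire argument reduces to verifying that the inverse data again satisfy the hypotheses of Lemma 4.1, and both the single-fixed-point property of $\varphi^{-1}$ and the membership of the new weight in $A(\mathbb{B}_N)$ are immediate.
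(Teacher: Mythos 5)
Your proposal is correct and follows essentially the same route as the paper: Lemma 4.1 gives the outer disk, and applying Lemma 4.1 to $C_{\psi,\varphi}^{-1}=C_{\frac{1}{\psi\comp\varphi^{-1}},\varphi^{-1}}$ (noting that $\varphi^{-1}$ fixes $e_1$ only and the new weight is admissible) gives the inner bound via the spectrum of the inverse. No discrepancies with the paper's argument.
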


\begin{proof}
By Lemma 4.1, the spectrum of $C_{\psi,\varphi}$ on $H^2(\beta,\mathbb{B}_N)$ is contain in
$$\{\lambda : |\lambda|\leqslant|\psi(e_1)|\}.$$
On the other hand, since $\varphi^{-1}$ is also an automorphism of $\mathbb{B}_N$ that fixes $e_1$ only and $\frac{1}{\psi}$ is bounded away from zero on $\mathbb{B}_N$, the spectrum of $C_{\psi,\varphi}^{-1}=C_{\frac{1}{\psi\comp\varphi^{-1}},\varphi^{-1}}$ is contain in
$$\{\lambda : |\lambda|\leqslant|\frac{1}{\psi(e_1)}|\},$$
which means that the spectrum of $C_{\psi,\varphi}$ is also contain in the set
$$\{\lambda : |\lambda|\geqslant|\psi(e_1)|\}.$$
Thus the corollary is proved.
\end{proof}

\subsection{The spectra}

\begin{lemma}
Suppose $\varphi\in Aut(\mathbb{B}_N)$ fixes the point $e_1$ only. Then for any $\delta\in(0,1)$, we can find $z_0\in \mathbb{B}_N$ such that $d(\varphi_{j_1}(z_0),\varphi_{j_2}(z_0))>\delta$ whenever $j_1\ne j_2$. Here $d(\cdot,\cdot)$ denote the pseudo-hyperbolic distance between two points in $\mathbb{B}_N$.
\end{lemma}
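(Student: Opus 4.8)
The plan is to exploit the automorphism-invariance of the pseudo-hyperbolic metric and pass to the Siegel model, where $\varphi$ becomes an explicit Heisenberg translation. \emph{First reduction.} Since $d$ is invariant under automorphisms of $\mathbb{B}_N$ and each $\varphi_k$ is an automorphism, applying $\varphi_{-j_2}$ to both arguments gives $d(\varphi_{j_1}(z_0),\varphi_{j_2}(z_0))=d(\varphi_{j_1-j_2}(z_0),z_0)$. Hence it suffices to produce a single $z_0\in\mathbb{B}_N$ with $d(\varphi_n(z_0),z_0)>\delta$ for every $n\neq 0$. I will look for $z_0=\Phi^{-1}(w_0)$ with $w_0=(i\rho,0')\in H_N$ and $\rho>0$ to be chosen small; recall from the proof of Lemma 4.1 that $\sigma=\Phi\circ\varphi\circ\Phi^{-1}$ has the form $\sigma(w)=(w^{[1]}+a^{[1]}+2i\langle Uw',a'\rangle,Uw'+a')$ with $U$ unitary and $a\in\partial H_N$.

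\emph{Transferring the metric.} Pushing the ball formula for $1-d^2$ through the Cayley transform (a routine computation in which the Jacobian factors $(1-z^{[1]})$ cancel in the ratio) should give
\[
1-d(z,\tilde z)^2=\frac{h(w)\,h(\tilde w)}{|r(w,\tilde w)|^2},\quad h(w)=\operatorname{Im}w^{[1]}-|w'|^2,\ r(w,\tilde w)=\tfrac{1}{2i}\big(w^{[1]}-\overline{\tilde w^{[1]}}\big)-\langle w',\tilde w'\rangle .
\]
Writing $w_n=\sigma_n(w_0)$, a direct check using $\operatorname{Im}a^{[1]}=|a'|^2$ (because $a\in\partial H_N$) shows $h(w_{n+1})=h(w_n)$, so $h(w_n)\equiv\rho$ for all $n$. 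Consequently $d(w_n,w_0)>\delta$ is equivalent to $|r(w_n,w_0)|>\rho/\sqrt{1-\delta^2}$, and since $w_0'=0'$ and $\overline{w_0^{[1]}}=-i\rho$ one computes $|r(w_n,w_0)|=\tfrac12|w_n^{[1]}+i\rho|$, whose square equals $\tfrac14\big(P_n^2+(Q_n+2\rho)^2\big)$ with $P_n=\operatorname{Re}w_n^{[1]}$ and $Q_n=|w_n'|^2$. The recursion for $\sigma$ shows that both $P_n$ and $Q_n$ are \emph{independent of} $\rho$. Thus the whole problem reduces to choosing $\rho$ so that
\[
P_n^2+(Q_n+2\rho)^2>\frac{4\rho^2}{1-\delta^2}\qquad\text{for all }n\neq 0 .
\]

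\emph{Choosing $\rho$.} The pair $(P_n,Q_n)$ vanishes exactly when $w_n=w_0$, i.e.\ when $\sigma^n(w_0)=w_0$; but $\sigma^n$ is the Siegel model of $\varphi_n$, which has Denjoy--Wolff point $e_1$ and no fixed point in $\mathbb{B}_N$, so $\sigma^n$ is fixed-point-free on $H_N$ and hence $(P_n,Q_n)\neq(0,0)$ for every $n\neq 0$. For large $|n|$ the orbit escapes, $|w_n|\to\infty$, which forces $P_n^2+Q_n^2\to\infty$; since $\tfrac{4\rho^2}{1-\delta^2}\le\tfrac{4}{1-\delta^2}$ whenever $\rho\le1$ and $(Q_n+2\rho)^2\ge Q_n^2$, there is an $M$ independent of $\rho\in(0,1]$ beyond which the inequality holds automatically. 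For the finitely many $n$ with $0<|n|\le M$ one has $P_n^2+Q_n^2>0$, so as $\rho\to 0^+$ the left side tends to $P_n^2+Q_n^2>0$ while the right side tends to $0$; choosing $\rho\le1$ smaller than the finite threshold required for each such $n$ makes every inequality hold simultaneously, and then $z_0=\Phi^{-1}(w_0)$ is as required.

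The main obstacle I anticipate is organizing the uniformity in $n$: the inequality must be secured for all $n\neq 0$ at once, which forces the split into a uniform tail estimate (from $|w_n|\to\infty$) and a finite range handled by shrinking $\rho$. The metric-transfer formula and the height-invariance $h(w_n)\equiv\rho$ are short computations, and the nonvanishing of $(P_n,Q_n)$ rests only on the fixed-point-free property already built into the hypothesis.
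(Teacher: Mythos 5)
Your argument is correct, and it is worth noting that despite the change of model your test point is exactly the paper's: $\Phi^{-1}\bigl((i\rho,0')\bigr)=-re_1$ with $r=\frac{1-\rho}{1+\rho}$, so both proofs try the points $-re_1$ with $r$ near $1$ and both organize the verification as a finite range (secured by pushing the point toward the boundary, i.e.\ shrinking $\rho$) plus a uniform tail (secured by the orbit escaping to the Denjoy--Wolff point). The execution is genuinely different, though. The paper stays in the ball: for the tail it uses the uniform convergence $\varphi_n\to e_1$ on compact subsets of $\overline{\mathbb{B}_N}\backslash\{e_1\}$ (Remark 4.2) to bound $1-|\varphi_n(-re_1)|^2$ and $|1-\langle -re_1,\varphi_n(-re_1)\rangle|$ directly, and for each fixed $n$ it uses $\lim_{r\to1}d(-re_1,\varphi_n(-re_1))=1$, which rests on $\varphi_n(-e_1)\ne -e_1$. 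You instead transfer the metric to $H_N$, where the invariance of the height $h$ under the Heisenberg translation and the $\rho$-independence of $(P_n,Q_n)$ collapse everything to one explicit scalar inequality; this is more computational but also more quantitative. Two loose ends to tie down: the metric-transfer identity you flagged with ``should give'' is indeed correct (the factors $|1-z^{[1]}|^2$ cancel exactly as you predict), and the assertion that $\varphi_n$ has no fixed point in $\mathbb{B}_N$ for $n\ne 0$ --- which the paper needs in the equivalent guise $\varphi_n(-e_1)\ne-e_1$ and also leaves implicit --- deserves its one-line proof: if $\varphi_n(p)=p$ with $p\in\mathbb{B}_N$, then $\varphi_{kn}(p)=p$ for all $k$, contradicting $\varphi_k(p)\to e_1$. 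Finally, since $d(\varphi_n(z_0),z_0)=d(z_0,\varphi_{-n}(z_0))$, you only need $n>0$, which lets you skip the $n\to-\infty$ tail (otherwise you would also have to observe that $\varphi^{-1}$ is parabolic with the same Denjoy--Wolff point, so that $|w_{-n}|\to\infty$ as well).
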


\begin{proof}
For any $z\in \mathbb{B}_N$ and $n\in\mathbb{N}^*$,
$$1-d(z,\varphi_n(z))^2=\frac{(1-|z|^2)(1-|\varphi_n(z)|^2)}{|1-\langle z,\varphi_n(z)\rangle|^2}.$$

Let $\mathcal{M}=\{-re_1 : 0\leqslant r\leqslant1\}$. Since
$$\lim_{r\to 1}\varphi_n(-re_1)=\varphi_n(-e_1)\ne-e_1,$$
we have
$$1-\lim_{r\to 1}d(-re_1,\varphi_n(-re_1))^2=\lim_{r\to 1}\frac{(1-|-re_1|^2)(1-|\varphi_n(-re_1)|^2)}{|1-\langle -re_1,\varphi_n(-re_1)\rangle|^2}=0,$$
which means that
$$\lim_{r\to 1}d(-re_1,\varphi_n(-re_1))=1.$$
Thus for each $n\in\mathbb{N}^*$, we can find $r_n\in(0,1)$ such that $d(-re_1,\varphi_n(-re_1))>\delta$ whenever $r\in(r_n,1)$.

Since $\mathcal{M}$ is a compact subset of $\overline{\mathbb{B}_N}\backslash\{e_1\}$, by Remark 4.2, we can find $n_0\in\mathbb{N}^*$ such that $|\varphi_n(-re_1)-e_1|<1-\delta$
for all $n>n_0$ and $r\in(0,1)$. Thus, $|\varphi_n(-re_1)|>\delta$. Also,
$$1-\text{Re}\left(\varphi_n^{[1]}(-re_1)\right)<|\varphi_n(-re_1)-e_1|<1.$$
So $\text{Re}\left(\varphi_n^{[1]}(-re_1)\right)>0$. Therefore,
$$|1-\langle -re_1,\varphi_n(-re_1)\rangle|\geqslant1+r\text{Re}\left(\varphi_n^{[1]}(-re_1)\right)>1.$$
So for any $n>n_0$ and $r\in(0,1)$ we have
\begin{align*}
1-d(-re_1,\varphi_n(-re_1))^2&=\frac{(1-|-re_1|^2)(1-|\varphi_n(-re_1)|^2)}{|1-\langle -re_1,\varphi_n(-re_1)\rangle|^2}
\\&<1-\delta^2.
\end{align*}

Now take $r_0>\max\{r_j : 1\leqslant j\leqslant n_0\}$ and $z_0=-r_0e_1$. Then the discussion above shows that
$$d(z_0,\varphi_n(z_0))>\delta$$
for all $n\in\mathbb{N}^*$. Suppose $j_1\ne j_2$, then
$$d(\varphi_{j_1}(z_0),\varphi_{j_2}(z_0))=d(z_0,\varphi_{|j_1-j_2|}(z_0))>\delta.$$
\end{proof}

The next lemma strengthens the Theorem 7.11 in \cite{CM}.

\begin{lemma}
Suppose $\varphi\in Aut(\mathbb{B}_N)$ fixes the point $e_1$ only. Then we have
$$\lim_{z\to e_1}\frac{1-|\varphi(z)|^2}{1-|z|^2}=1.$$
\end{lemma}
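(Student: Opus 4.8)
We must show that for $\varphi\in Aut(\mathbb{B}_N)$ fixing the point $e_1$ only, the quantity $\frac{1-|\varphi(z)|^2}{1-|z|^2}$ tends to $1$ as $z\to e_1$. The plan is to exploit the explicit identity for automorphisms recorded in the preliminaries, namely
\[
1-|\varphi(z)|^2=\frac{(1-|z|^2)(1-|a|^2)}{|1-\langle z,a\rangle|^2},
\]
where $a=\varphi^{-1}(0)$. Dividing by $1-|z|^2$ reduces the claim to computing a boundary limit of the elementary expression $\frac{1-|a|^2}{|1-\langle z,a\rangle|^2}$, so the whole matter becomes showing that this ratio tends to $1$ as $z\to e_1$.

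First I would write down $a=\varphi^{-1}(0)\in\mathbb{B}_N$ and observe that the identity above gives
\[
\frac{1-|\varphi(z)|^2}{1-|z|^2}=\frac{1-|a|^2}{|1-\langle z,a\rangle|^2}.
\]
Hence it suffices to evaluate $\lim_{z\to e_1}|1-\langle z,a\rangle|^2$ and compare it with $1-|a|^2$. The naive substitution $z=e_1$ gives the numerator value $1-|a|^2$ over $|1-\langle e_1,a\rangle|^2=|1-\overline{a^{[1]}}|^2$, which is $1$ exactly when $|1-\overline{a^{[1]}}|^2=1-|a|^2$. The key structural fact I would invoke is that $e_1$ is a fixed point of $\varphi$ on the sphere, which forces a relation between $a$ and $e_1$: since $\varphi(e_1)=e_1$ and $\varphi$ is an automorphism, the boundary point $e_1$ is a fixed point, and one can use the transformation formula for $\varphi$ (or the conjugation to the Siegel half-space normal form used in Lemma 4.1) to pin down the geometry of $a$ relative to $e_1$.

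The cleanest route, I expect, is to observe that $e_1\in S_N$ and take the limit directly in the automorphism identity. Since $1-|\varphi(z)|^2\to 0$ and $1-|z|^2\to 0$ as $z\to e_1$ (because $\varphi(e_1)=e_1\in S_N$), the ratio is a genuine $0/0$ limit, and the identity converts it into the continuous expression $\frac{1-|a|^2}{|1-\langle z,a\rangle|^2}$, which is continuous at $z=e_1$ provided $\langle e_1,a\rangle\ne 1$. Thus the limit equals $\frac{1-|a|^2}{|1-\overline{a^{[1]}}|^2}$, and the remaining task is the identity $1-|a|^2=|1-\overline{a^{[1]}}|^2$. I would establish this last identity by using that $e_1$ is the Denjoy--Wolff fixed point: applying the automorphism identity at $z=e_1$ itself, namely $1-|\varphi(e_1)|^2=\frac{(1-1)(1-|a|^2)}{|1-\overline{a^{[1]}}|^2}=0$, is consistent but not enough, so instead I would use the fixed-point equation $\varphi(e_1)=e_1$ written out via the explicit form of $\varphi$, which yields a constraint equating $1-|a|^2$ with $|1-\overline{a^{[1]}}|^2$.

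The main obstacle is the final identity $1-|a|^2=|1-\overline{a^{[1]}}|^2$; everything else is routine substitution into the recorded automorphism formula. To handle it I would pass to the normal form: conjugating by the Cayley transform $\Phi$ as in Lemma 4.1 sends $\varphi$ to $\sigma(w)=(w^{[1]}+a^{[1]}+2i\langle Uw',a'\rangle,\,Uw'+a')$ on the Siegel domain fixing $\infty$, under which the desired limit becomes transparent because the translation structure of $\sigma$ makes $\frac{\operatorname{Im}\sigma^{[1]}(w)-|\sigma(w)'|^2}{\operatorname{Im}w^{[1]}-|w'|^2}\to 1$ as $w\to\infty$; transporting this back through $\Phi$ recovers the stated limit. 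Alternatively, and more directly, I expect the fixed-point relation $\varphi(e_1)=e_1$ combined with the first-order boundary behavior (the fact that $\frac{\partial\varphi^{[1]}}{\partial z^{[1]}}(e_1)=1$ for a parabolic automorphism, which is exactly the one-fixed-point case) forces the numerator and denominator to vanish at the same first-order rate, giving the limit $1$. I would present the argument through the automorphism identity and close it with this boundary-derivative computation.
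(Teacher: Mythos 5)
Your reduction is the right one and matches the paper's: using the identity $1-|\varphi(z)|^2=\frac{(1-|z|^2)(1-|a|^2)}{|1-\langle z,a\rangle|^2}$ with $a=\varphi^{-1}(0)$, the ratio becomes the continuous function $\frac{1-|a|^2}{|1-\langle z,a\rangle|^2}$, and everything hinges on the single identity $1-|a|^2=|1-a^{[1]}|^2$. But you never actually prove that identity --- you correctly flag it as ``the main obstacle'' and then only sketch two routes, neither of which is closed. That is the genuine gap. The paper's proof supplies exactly the missing ingredient: by Wolff's lemma (Theorem 2.2.22 in Abate), $\frac{|1-\varphi^{[1]}(z)|^2}{1-|\varphi(z)|^2}\leqslant\frac{|1-z^{[1]}|^2}{1-|z|^2}$ because $e_1$ is the only fixed point of $\varphi$; applying the same inequality to $\varphi^{-1}$ (which also fixes $e_1$ only) reverses it, so equality holds for all $z$; evaluating the resulting equality at $z=a$, where $\varphi(a)=0$, gives $\frac{|1-a^{[1]}|^2}{1-|a|^2}=1$. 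Note that the hypothesis that $\varphi$ has \emph{only one} boundary fixed point is used twice here (once for $\varphi$, once for $\varphi^{-1}$), and indeed the lemma is false for hyperbolic automorphisms, so any correct proof must use this hypothesis in an essential way; your write-up never isolates where it enters.

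Of your two sketched alternatives, the Siegel-domain one is salvageable but incomplete as stated: the defect function $\operatorname{Im}w^{[1]}-|w'|^2$ is exactly invariant under the parabolic normal form $\sigma$ (using $\operatorname{Im}a^{[1]}=|a'|^2$), and under the Cayley transform it corresponds to $\frac{1-|z|^2}{|1-z^{[1]}|^2}$; transporting the invariance back therefore yields precisely the equality $\frac{1-|\varphi(z)|^2}{|1-\varphi^{[1]}(z)|^2}=\frac{1-|z|^2}{|1-z^{[1]}|^2}$ that the paper gets from Wolff's lemma. But the limit does not then become ``transparent'': the transported ratio $\frac{|1-\varphi^{[1]}(z)|^2}{|1-z^{[1]}|^2}$ is still of the form $0/0$ as $z\to e_1$, and you must still specialize the equality at $z=a$ to extract $1-|a|^2=|1-a^{[1]}|^2$ and feed it back into the first identity. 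Your second alternative --- invoking $\frac{\partial\varphi^{[1]}}{\partial z^{[1]}}(e_1)=1$ ``for a parabolic automorphism'' --- is essentially circular, since the boundary dilation coefficient being $1$ at the Denjoy--Wolff point is a restatement of the conclusion rather than an independent input.
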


\begin{proof}
Since $e_1$ is the only fixed point of $\varphi$ in $\overline{\mathbb{B}_N}$, the Wolff's lemma in $\mathbb{B}_N$ ( see Theorem 2.2.22 in \cite{Aba} ) shows that
$$\frac{|1-\varphi^{[1]}(z)|^2}{1-|\varphi(z)|^2}\leqslant\frac{|1-z^{[1]}|^2}{1-|z|^2}$$
for all $z\in \mathbb{B}_N$. However, $e_1$ is also the only fixed point of $\varphi^{-1}$ in $\mathbb{B}_N$, so
$$\frac{|1-z^{[1]}|^2}{1-|z|^2}\leqslant\frac{|1-\varphi^{[1]}(z)|^2}{1-|\varphi(z)|^2}.$$
Thus we have
$$\frac{|1-\varphi^{[1]}(z)|^2}{1-|\varphi(z)|^2}=\frac{|1-z^{[1]}|^2}{1-|z|^2}$$
for all $z\in \mathbb{B}_N$.

Let $a=\varphi^{-1}(0)$, then
$$\frac{|1-a^{[1]}|^2}{1-|a|^2}=\frac{|1-\varphi^{[1]}(a)|^2}{1-|\varphi(a)|^2}=1,$$
so
\begin{align*}
\lim_{z\to e_1}\frac{1-|\varphi(z)|^2}{1-|z|^2}&=\lim_{z\to e_1}\frac{1-|a|^2}{|1-\langle a,z\rangle|^2}
\\&=\frac{1-|a|^2}{|1-a^{[1]}|^2}=1.
\end{align*}
\end{proof}

\begin{lemma}
Suppose $\varphi\in Aut(\mathbb{B}_N)$ fixes the point $e_1$ only, and $\psi\in A(\mathbb{B}_N)$ is bounded away from zero. Then the spectrum of $C_{\psi,\varphi}$ on $H^2(\beta,\mathbb{B}_N)$, for $\beta(n)=(n+1)^{1-\gamma}$ with $\gamma\geqslant1$, is the circle
$$\{\lambda : |\lambda|=|\psi(e_1)|\}$$
\end{lemma}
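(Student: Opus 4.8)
By Corollary~4.3 we already know $\sigma(C_{\psi,\varphi})$ is contained in the circle $\{\lambda:|\lambda|=|\psi(e_1)|\}$, so the entire content of the lemma is the reverse inclusion: every $\lambda$ with $|\lambda|=|\psi(e_1)|$ lies in $\sigma(C_{\psi,\varphi})$. I would prove this by showing that no such $\lambda$ can be in the resolvent set. Since the spectrum is closed and nonempty, and since a circular--symmetry argument (as in Lemma~3.7) would let me rotate any one spectral point around the whole circle, the cleanest route is first to exhibit a single $\lambda_0$ on the circle in $\sigma(C_{\psi,\varphi})$ and then invoke circular symmetry. I expect the parabolic analogue of Lemma~3.7 to hold here: one produces $F\in H^\infty(\mathbb{B}_N)$, bounded away from zero, solving the cohomological equation $F\comp\varphi=e^{i\theta}F$, conjugate by $M_F$, and conclude $C_{\psi,\varphi}$ is similar to $e^{i\theta}C_{\psi,\varphi}$; this collapses the problem to finding one point.

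\textbf{Producing one spectral point.}
To place a single $\lambda_0$ on the circle in the spectrum, the natural device is the approximate point spectrum, using the normalized reproducing kernels $g_k=K_{z_k}/\|K_{z_k}\|$ along an iteration sequence $z_k=\varphi_k(z_0)$, exactly as in Lemma~3.10. Proposition~2.5 gives $C_{\psi,\varphi}^*g_k=u_kg_{k+1}$ with $u_k=\overline{\psi(z_k)}\bigl((1-|z_k|^2)/(1-|z_{k+1}|^2)\bigr)^K$, and here the key new input is Lemma~4.5: in the parabolic case $(1-|\varphi(z)|^2)/(1-|z|^2)\to 1$ as $z\to e_1$, so $|u_k|\to|\psi(e_1)|$ as $k\to\pm\infty$. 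Thus the weights $u_k$ are asymptotically of constant modulus $|\psi(e_1)|$ on both ends, unlike the hyperbolic case where the two ends gave different radii and produced an annulus. I would assemble a formal eigenvector $h=\sum_k c_k g_k$ for $C_{\psi,\varphi}^*$ at a target $\lambda_0$ with $|\lambda_0|=|\psi(e_1)|$; the difficulty is that with $|u_k/\lambda_0|\to 1$ the series no longer converges absolutely, so a genuine eigenvector need not exist and one must instead build an approximate eigenvector (normalized finite truncations $h_M$ with $\|(C_{\psi,\varphi}^*-\lambda_0 I)h_M\|/\|h_M\|\to 0$).

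\textbf{The main obstacle and how to handle it.}
The hard part is controlling the near--orthogonality of the family $\{g_k\}$ so that the truncated sums do not collapse in norm; this is precisely where Lemma~4.4 enters. Choosing $z_0$ so that the iterates $\{\varphi_j(z_0)\}$ are pseudo-hyperbolically $\delta$-separated makes the kernels $g_k$ uniformly almost orthogonal (the Gram matrix is a small perturbation of the identity for $\delta$ close to $1$), so that $\|\sum_{|k|\le M}c_k g_k\|^2\approx\sum_{|k|\le M}|c_k|^2$. With $|u_k|\to|\psi(e_1)|=|\lambda_0|$, taking $c_k$ of comparable modulus on a long window and estimating $(C_{\psi,\varphi}^*-\lambda_0 I)h_M=\sum_k(u_{k-1}c_{k-1}-\lambda_0 c_k)g_k$ shows the error is concentrated only at the two endpoints of the window, hence negligible relative to $\|h_M\|$ once the window is long. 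This exhibits $\lambda_0$ in the approximate point spectrum of $C_{\psi,\varphi}^*$, so $\overline{\lambda_0}$ (equivalently, by circular symmetry, every point of modulus $|\psi(e_1)|$) lies in $\sigma(C_{\psi,\varphi})$. Combined with Corollary~4.3, this yields the asserted circle. As in Lemma~3.10, the separation/almost-orthogonality estimate is the technical heart, and I would either carry it out via the Gram-matrix bound or, following the paper's style, reduce the convergence bookkeeping to the corresponding argument already performed in \cite{Hyv} and Lemma~3.10.
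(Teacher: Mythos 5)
Your proposal is correct and follows essentially the same route as the paper's proof: a pseudo-hyperbolically separated iteration sequence (Lemma 4.4) giving almost-orthogonal normalized kernels, the weights $|u_k|\to|\psi(e_1)|$ from Lemma 4.5, and truncated window sums $h_m=\sum a_jg_{n_0+j}$ whose telescoping error sits only at the two endpoints, exhibiting every $\lambda$ with $|\lambda|=|\psi(e_1)|$ in the approximate point spectrum of $C_{\psi,\varphi}^*$. The preliminary circular-symmetry reduction you sketch is unnecessary (the paper does not use it, and the analogue of Lemma 3.7 with $F$ bounded away from zero is doubtful in the parabolic case), but this does not matter since, as you note, your construction already handles every point of the circle directly.
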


\begin{proof}
By Corollary 4.3, it is sufficient to prove that each point on the circle belongs to the spectrum of $C_{\psi,\varphi}$.

Let $m$ be a fixed positive integer, then by Lemma 4.4, we can find a iteration sequence $\{z_k\}_{k=0}^{+\infty}$ such that $$\left(1-d(z_k,z_j)^2\right)^K<\frac{1}{2e^4m}$$
whenever $k\ne j$. Here $K=(N-1+\gamma)/2$.

Let
$$g_j(z)=\frac{K_{z_j}(z)}{||K_{z_j}||}=\frac{(1-|z_j|^2)^K}{(1-\langle z,w\rangle)^{2K}}.$$
Then for any $k\ne j$,
\begin{align*}
|\langle g_k,g_j\rangle|&=\left(\frac{(1-|z_k|^2)(1-|z_j|^2)}{|1-\langle z_k,z_j\rangle|^2}\right)^K
\\&=\left(1-d(z_k,z_j)^2\right)^K<\frac{1}{2e^4m}.
\end{align*}
Also we have
$$C_{\psi,\varphi}^*g_j=u_jg_{j+1},$$
where
$$u_j=\overline{\psi(z_j)}\left(\frac{1-|z_j|^2}{1-|z_{j+1}|^2}\right)^K.$$
Since $z_j\to e_1$ as $j\to+\infty$, by Lemma 4.5 we have
$$\lim_{j\to +\infty}|u_j|=|\psi(e_1)|\lim_{j\to +\infty}\left(\frac{1-|z_j|^2}{1-|\varphi(z_j)|^2}\right)^K=|\psi(e_1)|.$$

Now suppose that $|\lambda|=|\psi(e_1)|$, then for any $m>0$ we can find $n_0\in\mathbb{N}^*$ such that
$$1-\frac{1}{m}<\left|\frac{u_n}{\lambda}\right|<1+\frac{1}{m}$$
for all $n\geqslant N$.

Let
$$h_m=\sum_{j=0}^{m-1}a_jg_{n_0+j},$$
where $a_0=1$ and $a_j=\lambda^{-j}\prod_{k=0}^{j-1}u_{n_0+j}$ for $j=1,2,...,m-1$. Then
$$|a_j|<(1+\frac{1}{m})^j<(1+\frac{1}{m})^m<e.$$
Also
$$|a_j|>(1-\frac{1}{m})^j>(1-\frac{1}{m})^m>\frac{1}{e}.$$
So
\begin{align*}
||h_m||^2=\langle h_m,h_m\rangle&\geqslant\sum_{j=0}^{m-1}|a_j|^2-\sum_{j_1\ne j_2}|a_{j_1}||a_{j_2}|\cdot|\langle g_{n_0+j_1},g_{n_0+j_2}\rangle|
\\&\geqslant\sum_{j=0}^{m-1}|a_j|^2-\frac{1}{2e^4m}\sum_{j_1\ne j_2}|a_{j_1}||a_{j_2}|
\\&\geqslant\frac{m}{e^2}-\frac{1}{2e^4m}\cdot m^2e^2=\frac{m}{2e^2}.
\end{align*}

However,
$$(C_{\psi,\varphi}^*-\lambda I)h_m=\lambda(a_mg_{n_0+m}-g_{n_0}),$$
where
$$a_m=\lambda^{-m}\prod_{k=0}^{m}u_{n_0+k}.$$
Since $|a_m|<(1+\frac{1}{m})^m<e$, we have
\begin{align*}
||(C_{\psi,\varphi}^*-\lambda I)h_m||^2&\leqslant|\lambda|^2\left(|a_m|^2+1\right)
\\&\leqslant|\psi(e_1)|^2(e^2+1).
\end{align*}

So
$$\frac{||(C_{\psi,\varphi}^*-\lambda I)h_m||^2}{||h_m||^2}\leqslant|\psi(e_1)|^2\cdot\frac{e^2(e^2+1)}{m}.$$
Thus, by letting $m\to+\infty$, we conclude that $\lambda$ belongs to the approximate point spectrum of $C_{\psi,\varphi}^*$.
\end{proof}

Finally we treat the general case when the fixed point of $\varphi$ is not necessarily $e_1$.

\begin{theorem}
Suppose $\varphi\in Aut(\mathbb{B}_N)$ has no fixed point in $\mathbb{B}_N$ and $a$ is the only fixed point of $\varphi$ on $S_N$. Let $\psi\in A(\mathbb{B}_N)$ be bounded away from zero.Then the spectrum of $C_{\psi,\varphi}$ on either $H^2(\mathbb{B}_N)$ or $A^2_\alpha(\mathbb{B}_N)$ is the circle
$$\{\lambda : |\lambda|=|\psi(a)|\}.$$
\end{theorem}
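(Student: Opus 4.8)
The plan is to reduce the statement to Lemma 4.6 by a conjugation that carries the boundary fixed point $a$ to $e_1$, exactly as the reduction used in the proof of Theorem 3.11 handled the two-fixed-point case. First I would invoke the transitivity of $Aut(\mathbb{B}_N)$ on $S_N$ (Corollary 2.2.5 in \cite{Aba}) to produce $\phi\in Aut(\mathbb{B}_N)$ with $\phi(a)=e_1$, and then set
$$\widehat{\varphi}=\phi\comp\varphi\comp\phi^{-1}\quad,\quad\widehat{\psi}=\psi\comp\phi^{-1}.$$

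Next I would verify that $\widehat{\varphi}$ fixes $e_1$ and nothing else. Since $\phi$ is a homeomorphism of $\overline{\mathbb{B}_N}$ taking the interior to the interior and the boundary to the boundary, conjugation by $\phi$ carries the fixed-point set of $\varphi$ bijectively onto that of $\widehat{\varphi}$. As $\varphi$ has no fixed point in $\mathbb{B}_N$ and the single boundary fixed point $a$, the map $\widehat{\varphi}$ has no interior fixed point and exactly the one boundary fixed point $\phi(a)=e_1$; thus $\widehat{\varphi}$ again falls into the one-fixed-point class required by Lemma 4.6. Moreover $\widehat{\psi}=\psi\comp\phi^{-1}\in A(\mathbb{B}_N)$ and is bounded away from zero, these two properties being preserved under the change of variable by the homeomorphism $\phi^{-1}$.

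Then I would run the similarity computation verbatim from Theorem 3.11: for any $f\in H^2(\beta,\mathbb{B}_N)$,
$$C_\phi^{-1}C_{\psi,\varphi}C_\phi(f)=\widehat{\psi}\cdot f\comp\widehat{\varphi}=C_{\widehat{\psi},\widehat{\varphi}}(f),$$
so that $C_{\psi,\varphi}$ is similar to $C_{\widehat{\psi},\widehat{\varphi}}$ and hence $\sigma(C_{\psi,\varphi})=\sigma(C_{\widehat{\psi},\widehat{\varphi}})$. Working in the $H^2(\beta,\mathbb{B}_N)$ framework of Remark 2.1 (which covers both $H^2(\mathbb{B}_N)$ and $A_\alpha^2(\mathbb{B}_N)$), Lemma 4.6 applies to $C_{\widehat{\psi},\widehat{\varphi}}$ and identifies its spectrum as the circle $\{\lambda:|\lambda|=|\widehat{\psi}(e_1)|\}$. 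Finally, since $\widehat{\psi}(e_1)=\psi(\phi^{-1}(e_1))=\psi(a)$, this circle is exactly $\{\lambda:|\lambda|=|\psi(a)|\}$, as claimed.

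I expect no serious obstacle here beyond bookkeeping. The one point that genuinely needs care is the second step: confirming that conjugation preserves the single-fixed-point structure, so that Lemma 4.6 (stated only for $\varphi$ fixing $e_1$ alone) truly applies. The remaining ingredients—the similarity identity and the evaluation $\widehat{\psi}(e_1)=\psi(a)$—are routine and parallel the hyperbolic argument of Theorem 3.11.
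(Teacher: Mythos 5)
Your proposal is correct and follows essentially the same route as the paper: conjugate by an automorphism sending $a$ to $e_1$, check that the conjugated symbol still fixes only $e_1$ and that the conjugated weight is in $A(\mathbb{B}_N)$ and bounded away from zero, then apply Lemma 4.6 and evaluate $\widehat{\psi}(e_1)=\psi(a)$. The only cosmetic difference is that the paper takes the conjugating map to be a unitary carrying $a$ to $e_1$ rather than a general automorphism from Corollary 2.2.5 of \cite{Aba}; either choice works.
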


\begin{proof}
Let U be a unitary map that takes $a$ to $e_1$. Then $\widehat{\varphi}=U\comp\varphi\comp U^{-1}$ is an automorphism of $\mathbb{B}_N$ that fixes $e_1$ only. Let $\widehat{\psi}=\psi\comp U^{-1}$, then a same discussion with Theorem 3.11 shows that $C_{\psi,\varphi}$ is similar to $C_{\widehat{\psi},\widehat{\varphi}}$. So
\begin{align*}
\sigma(C_{\psi,\varphi})&=\sigma(C_{\widehat{\psi},\widehat{\varphi}})
\\&=\{\lambda : |\lambda|=|\widehat{\psi}(e_1)|\}
\\&=\{\lambda : |\lambda|=|\psi(a)|\}.
\end{align*}
\end{proof}

\end{document}